\def\theequation{\thesection.\@arabic \c@equation}
\def\@citecolor{blue}
\def\@linkcolor{blue}
\def\@urlcolor{blue}
\def\theenumi{\@alph\c@enumi}
\theoremstyle{plain}
\newtheorem{thm}[equation]{Theorem}
\newtheorem{lemma}[equation]{Lemma}
\newtheorem{propn}[equation]{Proposition}
\theoremstyle{definition}
\newtheorem{defn}[equation]{Definition}
\newtheorem{example}[equation]{Example}
\newtheorem{question}[equation]{Question}
\newtheorem{conj}[equation]{Conjecture}
\theoremstyle{remark}
\newtheorem{remark}[equation]{Remark}
\newtheorem{remarkwithoutbox}[equation]{Remark}
\newtheorem{discussion}[equation]{Discussion}
\newtheorem{notation}[equation]{Notation}
\newtheorem*{notation*}{Notation}
\let\oldendremark\endremark
\def\endremark{\hfill\qedsymbol\oldendremark}
\let\oldendexample\endexample
\def\endexample{\hfill\qedsymbol\oldendexample}
\let\oldendconstruction\endconstruction
\def\endconstruction{\hfill\qedsymbol\oldendconstruction}
\let\oldenddiscussion\enddiscussion
\def\enddiscussion{\hfill\qedsymbol\oldenddiscussion}
\newcommand{\frakp}{{\mathfrak p}}
\newcommand{\frakm}{{\mathfrak m}}
\DeclareMathOperator{\calO}{\mathscr O} \let\strSh\calO
\DeclareMathOperator{\calI}{\mathscr I}
\DeclareMathOperator{\PP}{\mathbb{P}} \let\projective\PP
\DeclareMathOperator{\ara}{ara} \let\arank\ara
\newcommand{\ol}{\overline}
\newcommand{\ints}{\mathbb{Z}}
\newcommand{\defeq}{:=}
\DeclareMathOperator{\lcm}{lcm}
\DeclareMathOperator{\Var}{Var}
\DeclareMathOperator{\coker}{coker}
\DeclareMathOperator{\affine}{\mathbb{A}}
\DeclareMathOperator{\projdim}{pd}
\DeclareMathOperator{\height}{ht}
\DeclareMathOperator{\charact}{char}
\DeclareMathOperator{\Supp}{Supp}
\DeclareMathOperator{\Spec}{Spec}
\DeclareMathOperator{\Proj}{Proj}
\DeclareMathOperator{\rhomo}{\widetilde H}
\DeclareMathOperator{\Min}{Min}
\DeclareMathOperator{\homology}{H}
\DeclareMathOperator{\homolSh}{\mathcal H}
\DeclareMathOperator{\cohdim}{cd}
\newcommand{\define}[1]{\emph{#1}}
\newcommand{\minus}{\ensuremath{\smallsetminus}}
\DeclareMathOperator{\gens}{Gen}
\DeclareMathOperator{\monom}{Mon}
\DeclareMathOperator{\qccd}{qccd}
\DeclareMathOperator{\etcd}{\acute{e}cd}
\newcommand{\etale}{\'etale\xspace}
\newcommand{\Etale}{\'Etale\xspace}
\newcommand{\met}{\ensuremath{\mathrm{\acute{e}t}}}
\def\to{\longrightarrow}
\title[\'Etale cohomological dimension and arithmetic rank]%
{\'Etale cohomological dimension, a conjecture of Lyubeznik 
and bounds for arithmetic rank}
\author{Manoj Kummini}
\address{Department of Mathematics, Purdue University, West Lafayette, IN
47907-2607}
\email{nkummini@math.purdue.edu}
\author{Uli Walther}
\address{Department of Mathematics, Purdue University, West Lafayette, IN
47907-2607}
\email{walther@math.purdue.edu}
\thanks{UW was supported by the NSF under grant DMS~0901123.}
\keywords{Arithmetic rank, \etale cohomological dimension, rooting
complexes}
\subjclass[2000]{Primary 14F20, 13E15; Secondary 13D02}
\begin{document}

\begin{abstract}
We produce a criterion for open sets in projective $n$-space over a
separably closed field to have \'etale cohomological dimension bounded by
$2n-3$. We use the criterion to exhibit a scheme for which \'etale
cohomological dimension is smaller than what a conjecture of G.~Lyubeznik
predicts; the discrepancy is of arithmetic nature.
  
For a monomial ideal, we relate extremal graded Betti numbers and \'etale
cohomological dimension of the complement of the corresponding subspace
arrangement.
Moreover, we derive upper bounds for its arithmetic rank in terms of
invariants distilled from the lcm-lattice.
\end{abstract}

\maketitle

\section{Introduction}

Let $R$ be a Noetherian ring and $I$ an $R$-ideal. The
\emph{arithmetic rank} of $I$, denoted $\arank I$, is the least
integer $r$ for which there exist $g_1, \ldots, g_r \in R$
such that $\sqrt{(g_1, \ldots, g_r)} = \sqrt{I}$. Determining the exact
value of the arithmetic rank of an ideal is in general a very difficult
problem, and a complete answer is known only 
 in a small set of cases.
In a natural way, lower bounds for $\ara(I)$ come out of
non-vanishing results for suitable cohomology groups; on the other
hand, finding upper bounds usually requires explicit constructions.
The purpose of this paper is to provide some results of both kinds.

Let $U$ be a Noetherian $\Bbbk$-scheme. The \define{quasi-coherent
cohomological dimension} of $U$,
\begin{equation}
\label{equation:qccd}
\qccd (U) \defeq \max \{i : \homology^i(U, F) \neq 0, 
\;\text{for all quasi-coherent sheaves $F$ on $U$}\},
\end{equation}
is very closely connected to arithmetic rank. 
For example, suppose that $X$ is an affine $\Bbbk$-scheme with 
coordinate ring $R$. Let $Z$ be the subscheme of $X$ defined by an
$R$-ideal $I$ and
take $U = X \minus Z$. Then
\begin{equation}
\label{equation:arankLowerBddByCD}
\arank I \geqslant \qccd(U) + 1. 
\end{equation}

\smallskip

\noindent Let $U_\met$ be the \define{small \etale site} on $U$;
see~\cite {MilnEtale80}*{Section II.1}. By a \define{torsion sheaf on
  $U_\met$}, we mean a sheaf of Abelian torsion groups on $U_\met$ on
which $\charact \Bbbk$ operates as a unit.  Let $\ell \neq \charact
\Bbbk$ be a prime number; an \emph{$\ell$-torsion sheaf on $U_\met$}
is a torsion sheaf on which the action of $\ell$ is nilpotent.  The
\define{\etale cohomological dimension} $\etcd(U)$ and the
\define{\etale $\ell$-cohomological dimension} $\etcd_\ell(U)$ of $U$
are measures of the topological complexity of the underlying variety
\cite{LyubEtCohDim93}*{Sections~10 and~11}; they are defined by
\begin{align}
\begin{split}
\etcd(U) \defeq & \max \{i : \homology^i(U_\met, F) \neq 0, 
\;\text{for all torsion sheaves $F$ on $U_\met$} \}, 
\;\text{and}, \\ 
\etcd_\ell(U) \defeq & \max \{i : \homology^i(U_\met, F) \neq 0, 
\;\text{for all $\ell$-torsion sheaves $F$ on $U_\met$} \}.
\end{split}
\end{align}
It is known that $\etcd(U) = \max \{\etcd_\ell(U) : \ell \neq \charact
\Bbbk \;\text{is a prime number}\}$, and this number
gives another estimate for arithmetic rank:
\begin{equation}
\label{equation:arankLowerBddByCD2}
\arank I \geqslant \etcd(U) - \dim U + 1.
\end{equation}

Consider an $n$-dimensional scheme $U$ of finite-type over a
separably closed field $\Bbbk$. The most general vanishing result is:
$\etcd(U) \leqslant 2n$,~\cite{MilnEtale80}*{Theorem~VI.1.1}. There
are results refining this bound, if $U$ can be embedded in a nice ambient
space $X$, and if some information about $X\minus U$ is known. Such
information, typically, includes $\dim (X \minus U)$, the number of
components and the combinatorial data on their intersections.
For example, $\etcd(U)
\leqslant 2n-1$ if and only if no $n$-dimensional component of $U$ is
proper~\cite {LyubEtCohDim93}*{Theorem~3.5}. One step further, for
a non-singular affine variety $X$ and a closed subscheme $Z$ of
$X$,~\cite{LyubEtCohDim93}*{Theorem~4.9} shows that $\etcd(X\minus Z)
\leqslant 2n-2$ if and only if every irreducible component of $Z$ has
dimension $\geqslant 2$ and $Z \minus \{p\}$ is locally analytically
connected at $p$ for every closed point $p \in Z$. Suppose now in addition
that
each component of $Z$ has dimension at least $3$. Under what
circumstances can one hope to push the estimate for $\etcd(U)$ one
step further? Topological information on the
components of $Z$ together with combinatorial data regarding how they
are stacked together is not enough, see
Remark~\ref{rem:2vs3}.\eqref{rem:2vs3detail}. On the
positive side,
we give in Theorem~\ref{thm:subschLSTCI} a sufficient
condition to ensure that $\etcd(U) \leqslant 2n-3$, the additional
ingredient \ref{thm:subschLSTCI}.\eqref{enum:constecd} being of
arithmetic nature.

\smallskip

Both estimates~\eqref{equation:arankLowerBddByCD}
and~\eqref{equation:arankLowerBddByCD2} result from
covering $U$ by
$\arank I$ affine open subsets.
The inequalities
also hold when $X$ is a projective
$\Bbbk$-scheme, $Z$ is a closed subscheme and $U = X \minus
Z$. For, if
$X'$ and $Z'$ are the respective affine cones and
$U' = X' \minus Z'$, by~\cite {BrShLoco98}*{Chapter~20} 
and~\cite {LyubEtCohDim93}*{Proposition~10.1} (also see 
\eqref{equation:affineConeEtCD} below),
\begin{equation}
\label{equation:affineConeCDs}
\qccd(U') = \qccd(U) \;\text{and}\; \etcd(U') = \etcd(U)+1.
\end{equation}

The following conjecture of G.~Lyubeznik resulted from an attempt to
understand
whether
one of the estimates of~\eqref{equation:arankLowerBddByCD} and
\eqref{equation:arankLowerBddByCD2} provides
a better bound than the other, in general:

\begin{conj}[\cite {LyubSurveyLocalCoh02}*{p.~147}]
\label{conj:lyubeznikECD}
Let $U$ be a $\Bbbk$-scheme. Then $\etcd(U) \geqslant \qccd(U) + \dim U$. 
\end{conj}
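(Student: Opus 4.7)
My plan is to attempt the conjecture through two reductions followed by an induction on dimension. First, by \eqref{equation:affineConeCDs}, passing from a projective $U$ to its affine cone $U'$ sends the triple $(\etcd, \qccd, \dim)$ to $(\etcd + 1, \qccd, \dim + 1)$, so the conjectured inequality is invariant under this operation and I may assume $U$ is quasi-affine. Second, fix a quasi-coherent sheaf $F$ witnessing $q = \qccd(U)$, meaning $H^{q}(U, F) \neq 0$; the goal is then to construct a torsion \'etale sheaf $G$ on $U_\met$ with $H^{q + n}(U_\met, G) \neq 0$, where $n = \dim U$.

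I would then induct on $n$. The base case $n = 0$ is immediate since $U$ is a disjoint union of points and both invariants vanish above degree $0$. For the inductive step, choose a sufficiently general hyperplane $H$ and consider the open $V = U \minus H$ and the closed subscheme $W = U \cap H$ of dimension $n-1$. Long exact sequences in quasi-coherent and in \'etale cohomology with supports on $W$ relate the two invariants of $U$ to those of $V$ and $W$. The inductive hypothesis applied to $W$ yields an \'etale class in degree $q + (n-1)$; Artin's affine vanishing theorem \cite{MilnEtale80} controls $\etcd(V)$; combining these via the boundary map in the \'etale long exact sequence should push the degree up by one to produce the required class on $U$ in degree $q + n$.

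The principal obstruction I anticipate is that quasi-coherent and \'etale cohomology have fundamentally incompatible coefficient systems: the former requires an $\calO_U$-module structure, while the latter demands torsion coefficients on which $\charact \Bbbk$ acts invertibly. There is no natural functor converting the witness $F$ into an \'etale sheaf $G$ whose cohomology is comparable in a controlled range of degrees, and the inductive step above will introduce correction terms whose vanishing depends on arithmetic data --- the amount of $\ell$-torsion available for various primes $\ell$, or the behaviour of Frobenius when $\charact \Bbbk > 0$. I expect this is precisely where an attempted proof should break down; indeed, the abstract foreshadows a counterexample whose discrepancy is, as the authors put it, of arithmetic nature.
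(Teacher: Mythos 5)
You have correctly recognized that the statement is a conjecture and that the paper does not prove it; in fact, the paper's contribution is to \emph{disprove} it. There is no proof in the paper to compare against. The counterexample is Example~\ref{example:reisnerLyubeznik}: take $\charact \Bbbk = 2$, $R = \Bbbk[x_0,\ldots,x_5]$, $I$ the Stanley--Reisner ideal of the minimal triangulation of $\mathbb{RP}^2$, and $U = \projective^5_\Bbbk \minus Z$ where $Z$ is the projective scheme cut out by $I$. Since $\projdim_R R/I = 4$, one has $\cohdim(I,R) = 4$ and hence $\qccd(U) = 3$ by~\eqref{equation:cohdimqccd}, so $\dim U + \qccd(U) = 8$. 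On the other hand, the paper combines Theorem~\ref{thm:subschLSTCI} (a vanishing criterion giving $\etcd_\ell(U) \leqslant 2n-3 = 7$) with the extremal Betti number computation of Theorem~\ref{thm:constEtcd} (which both supplies the arithmetic hypothesis~\ref{thm:subschLSTCI}.\eqref{enum:constecd} and furnishes nonvanishing in degree $7$ against a larger arrangement $Z_1 \supseteq Z$), and Remark~\ref{rem:2vs3}.\eqref{enum:reisnerLyubExactECD} concludes $\etcd(U) = 7 < 8$.

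Your diagnosis of where a proof attempt must break down is essentially correct, and it is worth emphasizing that it matches the mechanism of the counterexample. The quasi-coherent invariant $\qccd$ is governed by $\projdim_R R/I$ over the field $\Bbbk$, which for the Reisner ideal jumps to $4$ precisely because $\mathbb{RP}^2$ has $\ints/2$ in its reduced homology and $\charact \Bbbk = 2$. The \'etale invariant, by contrast, is computed with coefficients $\ints/\ell$ for $\ell \neq \charact \Bbbk$, and over such coefficients the same simplicial complex has no torsion; Theorem~\ref{thm:constEtcd} converts this into an extremal Betti number one step lower. Remark~\ref{rem:2vs3}.\eqref{rem:2vs3detail} makes the dependence on the pair $(\charact\Bbbk, \ell)$ explicit, and this is exactly the ``incompatible coefficient systems'' obstruction you anticipate in your final paragraph. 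So your inductive hyperplane-section scheme was never going to close --- not because of a technical slip, but because the inequality is false --- and your own closing remarks are the right place to have stopped.
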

Conjecture~\ref{conj:lyubeznikECD}
holds for
\begin{inparaenum}
\item complements of certain determinantal
varieties, if $\charact \Bbbk > 0$~\cite {BrunsSchwanzlDetVar90}*{p.440},
\item \label{enum:OgusExample} complements of certain Segre embeddings, in
characteristic zero~\cite{OgusLocCohDim73}*{Example~4.6}, and,
\item  complements of smooth
projective varieties, in characteristic zero~\cite
{VarbaroArithRk10}*{Theorem~2.21}.
\end{inparaenum}
However, 
in Example~\ref{example:reisnerLyubeznik}, 
we combine bounds for \etale cohomological dimension from
Theorem~\ref{thm:subschLSTCI} with arithmetic information from
Theorem~\ref{thm:constEtcd} and
provide a
counterexample to Conjecture~\ref{conj:lyubeznikECD}. 

\medskip

The \define{cohomological dimension} of $I$ on $R$ is defined by:
\begin{equation}
\label{equation:cohdim}
\cohdim(I,R) \defeq \max \{i : \homology^i_I(M) \neq 0, \;\text{for
  all $R$-modules $M$}\}. 
\end{equation}
Suppose that either $X = \Spec R$ is affine or that $R$ is graded, $X =
\Proj R$ and $I$ is homogeneous. Let $Z$ be defined by $I$.  Then
\begin{equation}
\label{equation:cohdimqccd}
\cohdim(I,R) = \qccd(X \minus Z)+1
\end{equation}
In the affine case,
see~\cite {Loco24Hrs}*{Proposition 9.12}; the projective case follows from
it, using~\eqref{equation:affineConeCDs}.

\medskip

Section~\ref{sec:upperbounds} deals with obtaining upper bounds for
arithmetic rank for monomial ideals. Let 
$R = \Bbbk[x_0, \ldots, x_n]$; write
$\mathfrak m$ for the homogeneous maximal ideal of $R$.
Let $I$ be a homogeneous $R$-ideal.
In this case, we may also consider the \emph{homogeneous arithmetic
rank} of $I$, which is the least integer $r$ such that there exist
homogeneous polynomials $g_1, \ldots, g_r \in R$ such that $\sqrt{(g_1,
\ldots, g_r)} = \sqrt{I}$. We denote it by $\arank_h I$. While $\arank I$
gives the least number of equations to define $\Spec R/I \subseteq
\affine^{n+1}$ set-theoretically, $\arank_h I$
gives the least number of equations to define $\Proj R/I \subseteq
\projective^n$ set-theoretically.  Note that
$\arank I \leqslant \arank_h I$; whether equality holds always is unknown.

\begin{defn}
\label{defn:lcmLattice}
We write $\monom(R)$ for the set of monomials of $R$.
Suppose that $I$ is a monomial $R$-ideal. We denote the set of the (unique)
minimal monomial generators of $I$ by $\gens(I)$.
The \define{lcm-lattice} of $I$, denoted $L_I$, is the set of the least
common multiples of the elements of $\gens(I)$, partially ordered by
divisibility.
The unique minimal element of $L_I$, which we denote by $\hat 0$, is the
monomial $1$. Let $L$ be any finite lattice.
For $\tau, \sigma \in L$, let $[\tau, \sigma] \defeq \{\rho : \tau
\preccurlyeq \rho \preccurlyeq \sigma\}$ and $(\tau, \sigma) \defeq \{\rho
: \tau \precneqq \rho \precneqq \sigma\}$, both inheriting the order from
$L$.  For $\sigma \in L$, the \emph{height} of $\sigma$, written $\height_L
\sigma$, is the length of the longest (saturated) chain in $[\hat 0,
\sigma]$.  The \emph{height} of $L$,
written $\height L$, is the length of the longest (saturated) chain in $L$.
The elements of $L$ of height $1$ are called \define{atoms}.
\end{defn}

The lcm-lattice $L_I$ is an atomic lattice; the 
atoms of $L_I$ are
precisely the monomial generators of $I$. From $L_I$ we construct
homogeneous generators $g_i$ of $I$ up to radical. This yields
two upper bounds.
First, in Theorem~\ref{thm:araLeqDimRootingCx} we show that
$\arank_h(I)-1$ is bounded above by the dimension of any rooting
complex (Definition~\ref{defn:rootingmaps}) on $L_I$, generalizing a
theorem of K.~Kimura~\cite{KimuraLyubResARA09}*{Theorem~1}. Later, in
Proposition~\ref{thm:araLeqHtLcmLattice}, we show that $\arank_h(I) -1$ is
bounded above by the height of $L_I$.

\section{\Etale cohomological dimension}

We look first at \etale cohomological dimensions of open subschemes of
$\projective^n_\Bbbk$. Then we express the \etale cohomological dimension
of complements of affine coordinate subspace arrangements in terms of
extremal Betti numbers of their defining ideals.  We combine these results
to construct a counterexample to a conjecture of Lyubeznik. 

\begin{notation}
\label{notation:notationECD}
Throughout this section, we take $\Bbbk$ to be separably closed. Let
$\ell \neq \charact \Bbbk$ be a prime number.
We will abbreviate `a sheaf of $\ints/\ell$-modules' by `a $\ints/\ell$-module'.
We will omit the subscript $\met$ in the proofs in this section, since all
the cohomology groups in the proof are for (small) \etale sites. For any
scheme $X$ and a point $p \in X$, write $\dim p = \dim \ol{\{p\}}$.
For a real number $r$, $\lfloor r\rfloor$ denotes the unique integer such
that $ r -1 < \lfloor r\rfloor \leqslant r$. For a local ring $A$, we write
$A^{\mathrm{sh}}$ for its strict Henselization.
If $j: U \to X$ is an \etale morphism of finite type and $F$
is a sheaf on $X_\met$, then we write $F|_U$ for $j^*F$~\cite
{MilnEtale80}*{Remark~II.3.1.(a)}.
\end{notation}

\subsection*{Open subschemes of $\mathbf P^n$}
We find an upper bound for the \etale cohomological dimension of the
complements of certain projective schemes. Call an irreducible
subvariety $Z$ of $\projective^n_\Bbbk$, defined by a prime $R$-ideal $I$,
\define{analytically irreducible at the vertex} if 
$I\widehat{R_{\mathfrak m}}$
(where $\;\widehat{}\;$ denotes completion) is a prime ideal.

\begin{thm}
\label{thm:subschLSTCI}
Let $Z \subseteq \projective^n_\Bbbk$ be a closed subscheme without any
zero-dimensional components, defined by the ideal sheaf $\calI$.
Suppose that
\begin{compactenum}
\item \label{enum:localnminustwo} In the stalk $\calO_{\PP^n,p}$, 
$\arank \calI_p \leqslant \min \{n-2,
n- \dim p - 1\}$
for all $p \in Z$ 
different from the generic points of $Z$,
\item \label{enum:nonemptyintersections} the irreducible components of $Z$
are analytically irreducible at the vertex and have
pairwise non-empty intersections, and,
\item \label{enum:constecd} 
$\homology^{\geqslant 2n-2}((\projective^n_\Bbbk \minus Z)_\met,
\ints/\ell) = 0$. 
\end{compactenum}
Then $\etcd_\ell(\projective^n_\Bbbk \minus Z) \leqslant 2n-3$.
\end{thm}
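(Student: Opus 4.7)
The plan is to apply the Leray spectral sequence
$$E_2^{p,q} = H^p(\PP^n_\met, R^q j_* F) \Longrightarrow H^{p+q}(U_\met, F)$$
for the open immersion $j\colon U = \PP^n_\Bbbk\setminus Z \hookrightarrow \PP^n_\Bbbk$ and an arbitrary $\ell$-torsion sheaf $F$ on $U_\met$, with the goal of forcing the abutment to vanish in degrees $\geqslant 2n-2$. The sheaves $R^q j_* F$ are supported on $Z$ for $q \geqslant 1$, with stalks $(R^q j_* F)_{\bar p} \cong H^q(U^{\mathrm{sh}}_p, F)$, where $U^{\mathrm{sh}}_p = \Spec\calO^{\mathrm{sh}}_{\PP^n,\bar p}\setminus V(\calI_p\calO^{\mathrm{sh}}_{\PP^n,\bar p})$.

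The first step uses~\eqref{enum:localnminustwo} to bound these stalks at non-generic points of $Z$. Cover $U^{\mathrm{sh}}_p$ by the $\arank\calI_p$ distinguished opens cut out by a system of generators of $\calI_p$ up to radical; each such open is affine of dimension $n-\dim p$, hence has \etale torsion cohomology vanishing above that dimension by Artin vanishing. The \v{C}ech--Mayer--Vietoris spectral sequence then yields $(R^q j_* F)_{\bar p}=0$ for $q > \arank\calI_p + (n-\dim p) - 1$, and the two bounds in~\eqref{enum:localnminustwo} force, outside the generic points of the irreducible components of $Z$, the support of $R^q j_* F$ to be zero-dimensional whenever $q\geqslant 2n-3$. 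These residual stalks then contribute only to the column $p=0$ of the spectral sequence.

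It remains to control the contribution at generic points of components of $Z$, where $\calI_p$ is primary to the maximal ideal of $\calO^{\mathrm{sh}}_{\PP^n,\bar p}$ and $U^{\mathrm{sh}}_p$ is the punctured strictly Henselian regular local scheme. Here hypothesis~\eqref{enum:nonemptyintersections} enters: analytic irreducibility at the vertex passes, via the affine cone construction and~\eqref{equation:affineConeCDs}, to connectedness properties of the completions of the components of $Z$, and pairwise non-empty intersections make the nerve of the components connected in the relevant degree. A Mayer--Vietoris argument over the components, combined with Hartshorne--Lichtenbaum-style vanishing for \etale local cohomology at the vertex, then bounds the top terms of the Leray spectral sequence by data that can be read off from the constant sheaf on each component.

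The main obstacle, and the point where~\eqref{enum:constecd} is invoked, is passing from $\ints/\ell$ (for which~\eqref{enum:constecd} is precisely the hypothesis) to an arbitrary $\ell$-torsion $F$. I would handle this by devissage: reduce first to constructible $\ints/\ell$-modules, then, via short exact sequences of the form $0 \to j'_! (F|_V) \to F \to i'_* i'^* F \to 0$, to extensions built from $j'_!(\ints/\ell)$ on locally closed strata $j'\colon V\hookrightarrow U$, where on small enough smooth strata the sheaf becomes locally constant and is controlled by~\eqref{enum:constecd} on a trivializing finite \etale cover together with the Hochschild--Serre spectral sequence. Combining this transferred constant-sheaf vanishing with the local stalk bounds from the second paragraph and the generic-point analysis from the third kills every $E_2^{p,q}$ with $p+q\geqslant 2n-2$, giving $H^i(U_\met,F)=0$ for $i\geqslant 2n-2$ and hence $\etcd_\ell(U)\leqslant 2n-3$.
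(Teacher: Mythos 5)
Your approach differs from the paper's in a way that introduces a real gap. The paper does not run a Leray spectral sequence on $\projective^n$ at all. Instead it devisses the constructible sheaf $F$ on $U$ into a constant piece $\iota_*\iota^*F$ (handled directly by hypothesis~\eqref{enum:constecd}) and pieces supported in dimension $\leqslant n-1$, and then handles the latter by a separate Lemma~\ref{propn:constantsOnClosed}, which asserts $\etcd_\ell(Y\minus Z)\leqslant 2n-4$ for every proper closed $Y\subsetneq\projective^n$. That lemma passes to a hypersurface $Y\supseteq Z$ and then to the \emph{affine cone} $Y'$ via~\eqref{equation:affineConeEtCD}, so that Artin's affine vanishing theorem ($H^i(Y',G)=0$ for $i>\dim\Supp G$) is available; the Lyubeznik support estimate~\eqref{equation:lyubezniknhalfj} together with~\eqref{equation:ecdAra} then closes the local-to-global spectral sequence~\eqref{equation:spectralseq}.

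Your version runs the Leray spectral sequence $E_2^{p,q}=H^p(\projective^n,R^qj_*F)$ directly on $\projective^n$, which is projective, not affine. For a torsion sheaf supported on a closed $d$-dimensional subset of a projective scheme, the general bound is $H^p=0$ only for $p>2d$, not $p>d$. Your stalk estimates (which, granting~\eqref{equation:ecdAra} in place of your appeal to Artin vanishing for the non-finite-type opens inside $\Spec\strSh^{\mathrm{sh}}_{\projective^n,\bar p}$) give, for instance, $\dim\Supp R^{2n-4}j_*F\leqslant 1$: indeed, at non-generic $p$ with $\dim p\geqslant 2$ one has $\arank\calI_p\leqslant n-\dim p-1$ so the stalk vanishes for $q>2n-2\dim p-2$, and at generic $p$ of a component of dimension $\geqslant 2$ the punctured strictly Henselian local scheme has \'etale cohomological dimension $2(n-\dim p)-1<2n-4$. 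But a $1$-dimensional projective support only guarantees $H^p(\projective^n,R^{2n-4}j_*F)=0$ for $p>2$, leaving $E_2^{2,2n-4}$ on the antidiagonal $p+q=2n-2$ uncontrolled. The same phenomenon recurs further along the antidiagonal (e.g.\ $E_2^{4,2n-6}$ with $\dim\Supp\leqslant 2$, etc.). Your reduction to constant coefficients plus~\eqref{enum:constecd} controls the abutment $H^{\geqslant 2n-2}(U,\ints/\ell)$ but says nothing about individual $E_2^{p,q}$; so these middle terms are a genuine obstruction to the argument as written. To repair it you would have to redo the computation on the affine cone, which is exactly what Lemmas~\ref{lemma:vertexIrredHypers} and~\ref{propn:constantsOnClosed} do.

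Two further, smaller, remarks. First, the invocation of Artin vanishing for the distinguished opens of $\Spec\strSh^{\mathrm{sh}}_{\projective^n,\bar p}$ is not literally correct since those opens are not of finite type over $\Bbbk$; the correct tool is~\eqref{equation:ecdAra}, which is Lyubeznik's Proposition~2.9. Second, your handling of~\eqref{enum:constecd} via trivializing finite \'etale covers and Hochschild--Serre is more elaborate than needed: after reducing to $F=j_!G$ with $j$ an open immersion and $G$ locally constant, the paper simply compares $F$ with $\iota_*\iota^*F$ for $\iota$ the inclusion of the geometric generic point of $U$, and notes that the latter is a genuine constant sheaf (the cokernel and kernel being supported in dimension $\leqslant n-1$), so~\eqref{enum:constecd} applies directly.
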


See Remark~\ref{remark:hypForProjComplement} below for a discussion of the
hypotheses of Theorem~\ref{thm:subschLSTCI}. The key step in the proof of
the theorem is Lemma~\ref{lemma:vertexIrredHypers}, which establishes that
certain local \etale cohomology sheaves on the affine cones are not
supported at the vertex. Using this result, we get a bound for the \etale
cohomological dimension of closed subschemes of  $(\projective^n_\Bbbk
\minus Z)$, in Lemma~\ref{propn:constantsOnClosed}. We start by
collecting some basic facts of \etale cohomology.

\begin{discussion}
\label{discussion:filtrConstLowerShr}
Let $X$ be a scheme, and $Z$ a closed subscheme of $X$; denote
the complementary open embedding by
$j : (X \minus Z) \to X$. We write $\homolSh^j_{Z}(X,-)$ for the $j$th
right derived
functor of $F \mapsto \ker(F \to j_*j^*F)$. Let $F$ be any $\ints/\ell$-module
on $X$. First, \cite
{LyubEtCohDim93}*{Corollary~2.8} gives 
\begin{equation}
\label{equation:lyubezniknhalfj}
\dim \Supp \homolSh^j_{Z}(X, F) \leqslant 
\left\lfloor \dim \Supp F - \frac j2 \right\rfloor, 
\quad\text{for all}\; j.
\end{equation}
Secondly, let $p \in Z$ and write $A = (\strSh_{X, p})^{\mathrm{sh}}$.  
Then, by~\cite {LyubEtCohDim93}*{(1.11b)},
\begin{equation}
\label{equation:stalkHomolSheaf}
\homolSh^i_{Z}(X, F)_p = \homology^i_{Z}(
\Spec (\strSh_{X, p})^{\mathrm{sh}}, 
F|_{\Spec (\strSh_{X, p})^{\mathrm{sh}}}).
\end{equation}
Further, we have a spectral sequence
(see~\cite {LyubEtCohDim93}*{(1.3a)})
\begin{equation}
\label{equation:spectralseq}
E_2^{ij} = 
\homology^i(X, 
\homolSh^j_{Z}(X, F)) \Longrightarrow \homology^{i+j}_{Z'}(X, F).
\end{equation}
If $X = \Spec A$, where $A$ is a strictly Henselian ring, then
we have $E_2^{ij} = 0$ for all $i > 0$, so, 
\begin{equation}
\label{equation:spectralseqSHRing}
\homology^0(\Spec A, \homolSh^j_{Z}(\Spec A, F)) = 
\homology^j_{Z}(\Spec A, F), \;\text{for all}\; j.
\end{equation}
If, additionally, $A$ is the strict Henselization of a ring that is
essentially of finite type over a field, and $Z$ is
defined an ideal $I$, then (by \cite{LyubEtCohDim93}*{Proposition~2.9})
\begin{equation}
\label{equation:ecdAra}
\homology^j_{Z}(\Spec A, F) = 0,
\quad\text{for all}\; j > \dim A + \arank I, 
\;\text{and for all}\; F.
\end{equation}
Now suppose that $X$ and $Z$ are projective, and write $X'$ and $Z'$ for
their respective affine cones.
Then, by~\cite {LyubEtCohDim93}*{Proposition~10.1},
\begin{equation}
\label{equation:affineConeEtCD}
\homology^{\geqslant t+1}(X' \minus Z', \pi^*F) = 0
\;\text{if and only if}\;
\homology^{\geqslant t}(X \minus Z, F) = 0 
\end{equation}
for every sheaf $F$ on $(X \minus Z)_\met$ and for every $t \geqslant 0$.
\end{discussion}

\begin{notation}
We write $I$ for the defining ideal of $Z$ and
$\Var(I)$ for $\{\mathfrak p
\in \Spec R : I \subseteq \mathfrak p\}$.
\end{notation}

\begin{lemma}
\label{lemma:genericPtAra}
Assume hypothesis~\ref{thm:subschLSTCI}.\eqref{enum:localnminustwo}. 
Then $\arank I R_{\mathfrak p}\leqslant n-2$
for all $\mathfrak p \in\Var(I)\minus\{\frakm\}$. Moreover, 
$\arank I R_{\mathfrak p}\leqslant \height \mathfrak p - 1$ for all 
$\mathfrak p \in \Var(I) \minus (\{ \frakm \} \cup \Min(I))$.
\end{lemma}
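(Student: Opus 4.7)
The plan is to reduce the computation of $\arank IR_{\mathfrak p}$ to that of $\arank \calI_p$ at the point $p \in \PP^n_\Bbbk$ corresponding to the homogeneous core $\mathfrak p^*$ of $\mathfrak p$, by way of a ring map $\calO_{\PP^n_\Bbbk, p} \to R_{\mathfrak p}$ under which $IR_{\mathfrak p}$ is the extension of $\calI_p$; hypothesis~\ref{thm:subschLSTCI}.\eqref{enum:localnminustwo} will then supply the numerical bound.

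First I would extract a purely geometric consequence of~\ref{thm:subschLSTCI}.\eqref{enum:localnminustwo}: every irreducible component of $Z$ must have dimension at least $2$. Indeed, if $Z_0 \subseteq Z$ were a $1$-dimensional component, then its closed points would be dense (as $Z_0$ is Jacobson), and irreducibility would force some closed point $p' \in Z_0$ to lie on no other component of $Z$; the only minimal prime of $\calI_{p'}$ would then come from $Z_0$, giving $\arank \calI_{p'} \geqslant \height \calI_{p'} = n-1$, contradicting the hypothesis. In particular, $\height \mathfrak p^* \leqslant n-2$ whenever $\mathfrak p^* \in \Min(I)$.

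For $\mathfrak p \in \Var(I) \minus \{\frakm\}$, observe that $\mathfrak p^* \neq \frakm$ (since $\frakm$ is the only graded maximal ideal), and let $p$ be the point of $\PP^n_\Bbbk$ with homogeneous prime $\mathfrak p^*$. Pick a homogeneous $x_i \notin \mathfrak p^*$; then $x_i \notin \mathfrak p$ as well, since any homogeneous element of $\mathfrak p$ lies in $\mathfrak p^*$. The standard identification $R_{x_i} \cong R_{(x_i)}[x_i, x_i^{-1}]$ together with the contraction identity $\mathfrak p R_{x_i} \cap R_{(x_i)} = \mathfrak p^*_{(x_i)}$ (which follows from the fact that in the graded domain $R/\mathfrak p^*$ the prime $\mathfrak p/\mathfrak p^*$ contains no nonzero homogeneous element and so contracts to $0$ in the homogeneous localization) exhibits $R_{\mathfrak p}$ as a localization of $\calO_{\PP^n_\Bbbk, p}[x_i, x_i^{-1}]$. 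In particular there is a natural flat ring map $\calO_{\PP^n_\Bbbk, p} \to R_{\mathfrak p}$ under which $IR_{\mathfrak p}$ is the extension of $\calI_p$; generators of $\calI_p$ up to radical therefore extend to generators of $IR_{\mathfrak p}$ up to radical, so $\arank IR_{\mathfrak p} \leqslant \arank \calI_p$.

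Finally I would bound $\arank \calI_p$. If $\mathfrak p^* \notin \Min(I)$, then hypothesis~\ref{thm:subschLSTCI}.\eqref{enum:localnminustwo} applies at $p$ to give $\arank \calI_p \leqslant \min\{n-2,\, \height \mathfrak p^* - 1\}$. If $\mathfrak p^* \in \Min(I)$, then $\calI_p$ is $\frakm_p$-primary, so $\arank \calI_p = \dim \calO_{\PP^n_\Bbbk, p} = \height \mathfrak p^* \leqslant n-2$ by the first paragraph. Using the dichotomy $\height \mathfrak p^* = \height \mathfrak p$ (for $\mathfrak p$ homogeneous) or $\height \mathfrak p - 1$ (otherwise), and noting that $\mathfrak p \in \Min(I)$ forces $\mathfrak p$ to be homogeneous, one readily obtains both inequalities claimed. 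The main technical hurdle is the contraction identity in the third paragraph in the non-homogeneous case; a secondary subtlety is the edge case in the fourth paragraph where $p$ is itself a generic point of $Z$ and the hypothesis does not apply, which is handled precisely by the dimension bound from the first paragraph.
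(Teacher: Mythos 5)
Your proof is correct and shares the paper's core strategy: reduce $\arank IR_{\mathfrak p}$ to $\arank \calI_p$ via the natural map $\calO_{\PP^n,p}\to R_\frakp$ induced by the homogeneous core $\frakp^*$ of $\frakp$, then invoke hypothesis~\ref{thm:subschLSTCI}.\eqref{enum:localnminustwo} together with the relation $\height\frakp = \height\frakp^*$ or $\height\frakp^*+1$. Where you diverge is in the treatment of the case where $p$ is a generic point of $Z$ (so the hypothesis does not apply directly at $p$). The paper passes from $p$ to a nearby specialization $p'\in\ol{\{p\}}$ with $\dim p'=\dim p-1$, uses $\calI_p=\calI_{p'}\calO_p$ to get $\arank\calI_p\leqslant\arank\calI_{p'}$, and then applies the hypothesis at $p'$. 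You instead first extract from the hypothesis the auxiliary fact that every component of $Z$ has dimension at least~$2$ (by a density-of-closed-points argument on a hypothetical $1$-dimensional component), and then exploit that at a generic point $\calI_p$ is $\frakm_p$-primary, so $\arank\calI_p=\dim\calO_{\PP^n,p}=\height\frakp^*\leqslant n-2$. Both arguments are sound and of comparable length; the paper's route avoids the separate lemma about component dimensions, while yours makes explicit a structural consequence of the hypothesis that the paper only surfaces later (in the proof of Lemma~\ref{lemma:vertexIrredHypers}, where it is noted that the components of $Z'$ have dimension at least~$3$). Your closing dichotomy on $\height\frakp$ versus $\height\frakp^*$, together with the observation that non-homogeneous primes cannot lie in $\Min(I)$, correctly assembles both asserted inequalities.
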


\begin{proof}
Take $\mathfrak p \neq \frakm \in \Var(I)$ and write $\mathfrak
p^*\neq\frakm$ for the largest homogeneous $R$-subideal of $I$.
We denote the corresponding point in $Z$ by
$p$.  Since $\calO_{\PP^n,p}=\{\frac{f}{g}\mid g\not\in \frakp;
f,g\text{ homogeneous of equal degree}\}$ (modulo the usual
equivalence relation), there is a natural map $\calO_{\PP^n,p}\to
R_\frakp$ under which $\calI_p$ extends to $IR_\frakp$.  In
particular,
\begin{equation}\label{eqn-ara}
\ara(IR_\frakp)\leqslant
\ara(\calI_p)\qquad \text{ for all
}\frakp\in\Var(I)\minus\{\frakm\}. 
\end{equation}

If $p$ is not a generic point of $Z$, then \eqref{eqn-ara}, together
with hypothesis \ref{thm:subschLSTCI}.\eqref{enum:localnminustwo}, implies that $\ara(I R_\frakp) \le
\min\{n-2,n-\dim p-1\}=\min\{n-2,\height\frakp^*-1\}\le
\min\{n-2,\height\frakp-1\}$.  

So from now on assume $p\in Z$ to be generic (hence not closed), 
and let
$p'\in\ol {\{p\}} \subseteq Z$ be a point of dimension $\dim p -1$.  Since
$\calI_p=\calI_{p'}\calO_p$,
$\ara(\calI_p)\le\ara(\calI_{p'})$. 
Applying \eqref{eqn-ara} and hypothesis 
\ref{thm:subschLSTCI}.\eqref{enum:localnminustwo},
we find 
\[
\ara(IR_\frakp)\leqslant
\ara(\calI_p)\le\ara(\calI_{p'})\le \min\{n-2,n-\dim
p'-1\}\le n-2\qquad \text{ for all }\frakp\in\Var(I)\minus\{\frakm\}.
\] 
Finally, if $\frakp\not\in\Min(I)$ and
$p$ is generic,
$\height(\frakp)=\height(\frakp^*)+1$
(see~\cite{MatsCRT89}*{Exercise~13.6}).
Thus, in this case we find $\ara(I
R_\frakp)\le \min\{n-2,n-\dim
p'-1\}=\min\{n-2,\height\frakp^*\}= \min\{n-2,\height\frakp-1\}$.
\end{proof}

\begin{notation}
\label{notation:vertexIrredHypers}
Let $Y \subseteq \projective^n_\Bbbk$ be an irreducible and reduced
hypersurface such that $Z \subseteq Y$.
Let $Z' \subseteq Y'$ (where $Z' = \Spec R/I$) be the affine cones over $Z$
and $Y$, respectively.
Write $v$ for the vertex of $Y'$ (and of $Z'$),
and $(A, \mathfrak m_A) = (\strSh_{Y', v})^{\mathrm{sh}}$.
\end{notation}

\begin{lemma}
\label{lemma:vertexHypotheses}
Adopt Notation~\ref{notation:vertexIrredHypers}.
The hypotheses 
\ref{thm:subschLSTCI}.\eqref{enum:localnminustwo} 
and
\ref{thm:subschLSTCI}.\eqref{enum:nonemptyintersections}
imply, respectively: 
{\makeatletter\def\theenumi{\@alph\c@enumi${}^\prime$}\makeatother
\begin{asparaenum}
\item\label{enum:localnminustwoII}
$\arank I A_{\mathfrak q} \leqslant \min \{n-2, \height \mathfrak q-1\}$,
for all $\mathfrak q \in \Var(I) \minus (\{\mathfrak m_A\} \cup \Min (IA))$.  
\item\label{enum:nonemptyintersectionsII} for all minimal primes
$\mathfrak p$ and $\mathfrak q$ over $IA$, $\dim A/(\mathfrak p + \mathfrak
q) > 0$. 
\end{asparaenum} }
\end{lemma}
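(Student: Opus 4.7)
The plan is to reduce both parts to statements on the affine cone $Y'$ by contracting primes along the chain of ring maps $R \twoheadrightarrow R/J \hookrightarrow (R/J)_{\mathfrak m/J} \to A$, where the last map is a strict henselization---hence faithfully flat with $0$-dimensional fibers. This makes minimal primes, heights, and sums of ideals transport cleanly between $R/J$ and $A$. Since $Y$ is an irreducible hypersurface, $J = (g)$ is generated by an irreducible homogeneous polynomial, a fact that will be crucial for (a').

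For (a'), take $\mathfrak q \in \Var(IA) \minus (\{\mathfrak m_A\} \cup \Min(IA))$ and let $\mathfrak p$ be its contraction to $R$. I would first verify that $\mathfrak p \in \Var(I) \minus (\{\mathfrak m\} \cup \Min(I))$: the condition $\mathfrak p \neq \mathfrak m$ uses that only one prime of $A$ lies over $\mathfrak m/J$, and $\mathfrak p \notin \Min(I)$ uses going-down on the flat map $R/J \to A$ (otherwise $\mathfrak q$ would lie in $\Min(IA)$, contradicting our choice). Lemma~\ref{lemma:genericPtAra} then gives $\arank IR_{\mathfrak p} \leqslant \min\{n-2, \height \mathfrak p - 1\}$; flatness of the composition $R_{\mathfrak p} \to A_{\mathfrak q}$ yields $\arank IA_{\mathfrak q} \leqslant \arank IR_{\mathfrak p}$; and the $0$-dim-fiber property together with $\height J = 1$ in the catenary ring $R$ yields the height identity $\height \mathfrak q = \height \mathfrak p - 1$. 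These combine to $\arank IA_{\mathfrak q} \leqslant \min\{n-2, \height \mathfrak q\}$. The final saving of one (needed to reach $\height \mathfrak q - 1$) comes from $g \in I$: one can choose a minimal radical generating set of $IR_{\mathfrak p}$ that contains (a power of) $g$, and this generator vanishes in $A$, shaving off one element from the count on $A_{\mathfrak q}$.

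For (b'), hypothesis (b) keeps each component's defining ideal $I_i$ prime in $\widehat{R_{\mathfrak m}}$ and hence in $A$ (which sits between the localization and the completion), so $\Min(IA)$ is in bijection with the irreducible components $Z_1,\ldots,Z_s$ of $Z$. Given two minimal primes $\mathfrak p, \mathfrak q$ of $IA$ corresponding to components $Z_i, Z_j$, the non-empty intersection $Z_i \cap Z_j$ in $\PP^n$ says that $(I_i + I_j)$ is not $\mathfrak m$-primary in $R$; faithful flatness and $0$-dimensional fibers preserve this, giving $\dim A/(\mathfrak p + \mathfrak q) > 0$. The hardest step in the whole argument is the saving of one in (a'): while $g \in I$ and $g$ is irreducible, justifying its inclusion in a minimal radical generating set of $IR_{\mathfrak p}$ requires care beyond a formal prime-avoidance argument, and is the real technical heart of the lemma.
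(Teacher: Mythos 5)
Your overall template for (a$'$)---contract $\mathfrak q$ to a lower ring, invoke Lemma~\ref{lemma:genericPtAra}, then transport the bound back up---matches the shape of the paper's argument, but there is a crucial difference in \emph{which} ring you contract to, and the gap you flag in your last paragraph is a real one that the paper routes around rather than confronts.

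The paper contracts $\mathfrak q$ not to $R$ but to $S = \calO_{Y',v} = (R/J)_{\frakm/J}$: writing $\bar{\mathfrak q} = \mathfrak q \cap S$, since $A_{\mathfrak q}$ is a localization of $(S\minus\bar{\mathfrak q})^{-1}A$ one reduces to bounding $\arank\, IS_{\bar{\mathfrak q}}$. This is exactly the saving of one done for free: the generator $g$ is already zero in $S$, so the target becomes $\arank\, IS_{\bar{\mathfrak q}} \leqslant \min\{n-2, \height_A\mathfrak q - 1\}$, and one never needs to exhibit a minimal radical generating set of $IR_{\frakp}$ containing (a power of) $g$. The paper then splits into two cases on whether $\bar{\mathfrak q} \in \Min(IS)$: if not, Lemma~\ref{lemma:genericPtAra} applies through the quotient; if so, $\arank\, IS_{\bar{\mathfrak q}} = \height_S\bar{\mathfrak q}$ trivially and one uses $\height_S\bar{\mathfrak q} < \height_A\mathfrak q$. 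Your proposal has no analogue of this case split. More importantly, the final sentence of your (a$'$) argument---``one can choose a minimal radical generating set of $IR_{\mathfrak p}$ that contains (a power of) $g$''---is not a lemma you can cite or easily prove; it is \emph{not} true in general that $g \in I$ forces $\arank(I/(g)) \leqslant \arank(I) - 1$ in a local ring, and no prime-avoidance argument gives it. Since you explicitly identify this step as unresolved (``the real technical heart of the lemma''), the proposal as written does not close. By contrast, the paper's choice of $S$ as the contraction target replaces this step with bookkeeping on heights.

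Two smaller points. First, in (a$'$) you say the arithmetic-rank inequality $\arank\, IA_{\mathfrak q}\leqslant \arank\, IR_{\mathfrak p}$ follows from ``flatness of the composition $R_{\mathfrak p}\to A_{\mathfrak q}$''; that composition is \emph{not} flat, since it factors through the surjection $R_{\mathfrak p}\twoheadrightarrow S_{\bar{\mathfrak q}}$. The inequality is still true, but simply because a radical system of generators extends along any ring map. Second, your (b$'$) is essentially correct and parallels the paper, though where you appeal to ``$A$ sits between the localization and the completion'' the paper works directly with the henselization of $S$ and the identity $\widehat{A}=\widehat{S}$, plus the fact that strict henselization commutes with quotients, so that $A/(\mathfrak p+\mathfrak q)=(S/((\mathfrak p\cap S)+(\mathfrak q\cap S)))^{\mathrm{sh}}$; the dimension of that ring is then read off from hypothesis~\ref{thm:subschLSTCI}.\eqref{enum:nonemptyintersections} with no reference to $\widehat{R_{\frakm}}$.
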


\begin{proof}
Write $(S, \mathfrak m_S) = \strSh_{Y', v}$. 
\underline{\eqref{enum:localnminustwoII}}:
Let $\mathfrak q \in \Var(I) \minus (\{\mathfrak m_A\} \cup \Min (IA))$. 
Write $\bar{\mathfrak  q} = \mathfrak q \cap S$. Since $A_{\mathfrak
q}$ is a localization of $(S\minus \bar{\mathfrak q})^{-1}A$, it suffices
to show that $\arank I S_{\bar{\mathfrak  q}} \leqslant \min \{n-2,
\height \mathfrak q -1\}$. Lemma~\ref{lemma:genericPtAra} gives
$\arank I S_{\bar{\mathfrak  q}} \leqslant n-2$. 
By the going-down property (see~\cite {MatsCRT89}*{15.1}) of the flat
extension $S \to A$, $\height \bar{\mathfrak q} \leqslant \height \mathfrak
q$. Hence, if $\bar{\mathfrak q} \not \in \Min(IS)$, then (again by 
Lemma~\ref{lemma:genericPtAra})  $\arank IS_{\bar{\mathfrak q}} \leqslant
\height \mathfrak q -1$.
If $\bar{\mathfrak q} \in \Min(IS)$, then $\height \bar{\mathfrak q} <
\height \mathfrak q$, so, 
$\arank I S_{\bar{\mathfrak  q}} \leqslant \height \bar{\mathfrak q}
\leqslant \height \mathfrak q -1$.

\underline{\eqref{enum:nonemptyintersectionsII}}:
Since $\Bbbk$ is separably closed, $A$ is, in fact, the Henselization
of $S$.
If $\mathfrak p$ is minimal over $IA$, then $\mathfrak p =
(\mathfrak p \cap S)A$,
by~\ref{thm:subschLSTCI}.\eqref{enum:nonemptyintersections}, since $\widehat{A} = 
\widehat S$.
Hence, for $\mathfrak p$ and $\mathfrak q$ be minimal over $IA$, 
\[
A/(\mathfrak p + \mathfrak q) = 
\left(\frac{S}{(\mathfrak p \cap S) + (\mathfrak q \cap
S)}\right)^\mathrm{sh},
\]
which has positive dimension, again 
by~\ref{thm:subschLSTCI}.\eqref{enum:nonemptyintersections}.
(Note that taking the strict Henselization commutes with going modulo an
ideal~\cite {MilnEtale80}*{p.~38}.)
\end{proof}

\begin{lemma}
\label{lemma:vertexIrredHypers}
Adopt Notation~\ref{notation:vertexIrredHypers}. Then, 
under the hypotheses 
\ref{thm:subschLSTCI}.\eqref{enum:localnminustwo} and
\ref{thm:subschLSTCI}.\eqref{enum:nonemptyintersections},
\[
\homolSh^{\geqslant 2n-1}_{Z'}(Y', F)_v = 0, \;\text{for all}\; F.
\]
\end{lemma}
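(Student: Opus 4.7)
The plan is to descend to the strict Henselization. By \eqref{equation:stalkHomolSheaf}, the claim is equivalent to showing $\homology^j_{Z_A}(\Spec A, F) = 0$ for all $\ints/\ell$-sheaves $F$ and all $j \geq 2n-1$, where $Z_A$ denotes $Z' \times_{Y'} \Spec A$. Since $\dim A = n$, applying \eqref{equation:lyubezniknhalfj} yields $\dim \Supp \homolSh^j_{Z_A}(\Spec A, F) \leq \lfloor n - j/2 \rfloor$, which is negative for $j \geq 2n+1$; combined with \eqref{equation:spectralseqSHRing}, this already handles those degrees. For $j \in \{2n-1, 2n\}$ the support is zero-dimensional, hence concentrated at the unique closed point $v$ of $\Spec A$, and it remains to show the stalks at $v$ vanish in these two degrees.

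To this end, set $V = \Spec A \setminus \{v\}$ and $W = Z_A \cap V$, and consider the long exact sequence
$$\cdots \to \homology^j_v(\Spec A, F) \to \homology^j_{Z_A}(\Spec A, F) \to \homology^j_W(V, F|_V) \to \homology^{j+1}_v(\Spec A, F) \to \cdots.$$
For the middle term, I would pass to stalks on $V$: at any $\mathfrak q \in V$, \eqref{equation:stalkHomolSheaf} and \eqref{equation:ecdAra} give vanishing of $\homolSh^j_W$ at $\mathfrak q$ once $j > \height \mathfrak q + \arank(IA_{\mathfrak q}^{\mathrm{sh}})$. Lemma~\ref{lemma:vertexHypotheses}\eqref{enum:localnminustwoII} bounds $\arank(IA_{\mathfrak q}^{\mathrm{sh}}) \leq \min\{n-2, \height \mathfrak q - 1\}$ at non-minimal primes, with the corresponding bound following automatically at minimal ones. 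Combining this with the support-dimension estimate \eqref{equation:lyubezniknhalfj} applied on $V$ and the spectral sequence \eqref{equation:spectralseq}, the cohomologies $\homology^j_W(V, F|_V)$ vanish for $j \geq 2n$, with the residual group $\homology^{2n-1}_W(V, F|_V)$ coming only from sheaves supported on finitely many closed points of $V$ whose residue fields have Galois cohomological dimension at most one.

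The final step is to analyze $\homology^j_v(\Spec A, F)$ and the connecting maps in the LES for $j \in \{2n-1, 2n\}$, which is where hypothesis (b') (in the form of Lemma~\ref{lemma:vertexHypotheses}\eqref{enum:nonemptyintersectionsII}) is essential. Decomposing $Z_A = \bigcup_i V(\mathfrak p_i)$ into its irreducible components and applying Mayer-Vietoris, the local cohomologies at the vertex factor through contributions from the partial intersections $V(\sum_{i \in S} \mathfrak p_i)$; the positive-dimensional pairwise intersection property then supplies the connectedness needed to force surjectivity of the map $\homology^{2n-1}_W \to \homology^{2n}_v$ and vanishing of the map $\homology^j_v \to \homology^j_{Z_A}$ in the relevant range, delivering the desired vanishing of $\homology^{2n-1}_{Z_A}$ and $\homology^{2n}_{Z_A}$ at $v$. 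The main obstacle is precisely this coordination: threading the arithmetic-rank bounds from (a') along each stratum of the Mayer-Vietoris decomposition together with the connectedness data from (b'), especially at the boundary degree $j = 2n-1$, where the interplay of $\homology^{2n-2}_W$, $\homology^{2n-1}_v$, $\homology^{2n-1}_W$, and $\homology^{2n}_v$ must all be controlled simultaneously.
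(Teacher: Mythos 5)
Your reduction is sound: passing to $A=(\strSh_{Y',v})^{\mathrm{sh}}$ via \eqref{equation:stalkHomolSheaf} and \eqref{equation:spectralseqSHRing}, and then using \eqref{equation:lyubezniknhalfj} to dispose of all degrees $j\geqslant 2n+1$ and to pin the support of $\homolSh^j_{Z_A}(\Spec A, F)$ to $v$ for $j\in\{2n-1,2n\}$, is exactly the right opening move, and it parallels the paper's setup. However, the core of the lemma — actually proving the vanishing in degrees $2n-1$ and $2n$ — is not established in your write-up. Your final paragraph names a strategy (decompose $Z_A$ into irreducible components, run Mayer--Vietoris on the $V(\frakp_i)$, use the positive-dimensional pairwise intersections from Lemma~\ref{lemma:vertexHypotheses}\eqref{enum:nonemptyintersectionsII} to control the connecting maps in the long exact sequence) and you yourself flag this ``coordination'' as the main obstacle, but you never carry it out. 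As stated, ``the positive-dimensional pairwise intersection property then supplies the connectedness needed to force surjectivity\ldots'' is a claim, not an argument; the boundary degree $j=2n-1$, where $\homology^{2n-1}_W(V,\cdot)$ need not vanish, is precisely where the argument has to be done carefully, and it is left open.

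The paper sidesteps building this machinery by appealing to \cite{LyubEtCohDim93}*{Theorem~4.5}, which packages exactly the excision/Mayer--Vietoris bookkeeping you are gesturing at. Concretely, the paper's proof reduces the lemma to verifying the three hypotheses (i), (ii), (iii) of that theorem with $t=2n-1$: (i) and (iii) follow from the support bound \eqref{equation:lyubezniknhalfj}, while (ii) — the statement that $\homolSh^{t-s}_{Z_A}$ is not supported at geometric points $q$ of dimension $\frac{s+1}{2}$ lying over two minimal primes — is where both Lemma~\ref{lemma:vertexHypotheses}\eqref{enum:localnminustwoII} (to force $q$ non-minimal and bound $\arank$ there) and Lemma~\ref{lemma:vertexHypotheses}\eqref{enum:nonemptyintersectionsII} (to rule $q$ out from being minimal at all) enter. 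If you want to prove the lemma from scratch by your route, you would essentially be reproving that black-box theorem; the correct move is to cite it and verify its hypotheses, as the paper does.

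One more concrete slip: you assert that the bound $\arank(IA_\frakq)\leqslant\min\{n-2,\height\frakq-1\}$ ``follows automatically at minimal ones.'' It does not. If $\frakq\in\Min(IA)$ then $IA_\frakq$ has the maximal ideal as radical and $\arank(IA_\frakq)=\height\frakq$, strictly larger than $\height\frakq-1$. What actually saves the degree count at minimal primes is the dimension constraint coming from Lemma~\ref{lemma:vertexHypotheses}\eqref{enum:localnminustwoII}: every component of $Z'$ has dimension $\geqslant 3$, so $\height\frakq\leqslant n-3$ and $\dim A_\frakq^{\mathrm{sh}}+\arank IA_\frakq^{\mathrm{sh}}\leqslant 2(n-3)<2n-1$. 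This is precisely the observation ``$c(I)\leqslant n-2$'' that the paper records before invoking Lyubeznik's theorem.
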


\begin{proof}
In light of~\eqref{equation:stalkHomolSheaf} 
and~\eqref{equation:spectralseqSHRing},
it suffices to show that $\homolSh^{\geqslant 2n-1}_{Z'}(\Spec A, F) = 0$,
which we will achieve using~\cite {LyubEtCohDim93}*{Theorem~4.5}. Instead
of using 
\ref{thm:subschLSTCI}.\eqref{enum:localnminustwo} and
\ref{thm:subschLSTCI}.\eqref{enum:nonemptyintersections}, we will use the
corresponding statements from Lemma~\ref{lemma:vertexHypotheses}.

The embedding dimension of $A$ is at most $n+1$.
From Lemma~\ref{lemma:vertexHypotheses}.\eqref{enum:localnminustwoII}, we
see that every irreducible component
of $Z'$ has dimension at least $3$, so $c(I) \leqslant n-2$, in the notation of
\cite{LyubEtCohDim93}*{Theorem~4.5}.
We want to verify its hypotheses, taking $t=2n-1$. 

\underline{(i)}: the condition can rephrased as follows: for all $j$, 
$\dim \Supp \homolSh^j_{Z'}(\Spec A, F) \leqslant 
\left\lfloor \frac{t-j+1}2 \right\rfloor$,
if $t-j \leqslant 2c(I)-1$.
This, in turn,
follows from~\eqref{equation:lyubezniknhalfj}, 
since $\dim \Supp F \leqslant \dim A = n = \frac{t+1}2$.

\underline{(ii)}: Let $s$ be an odd integer such that $1 \leqslant s \leqslant
2c(I)-1$. If $j \geqslant t-s+1$, then,
by~\eqref{equation:lyubezniknhalfj}, we see that
$\dim \Supp \homolSh^j_{Z'}(\Spec A, F) \leqslant \lfloor \frac s2 \rfloor
= \frac{s+1}2 -1 $, so 
$\homolSh^j_{Z'}(\Spec A, F)$ is not supported at geometric points of
dimension $\frac{s+1}2$.
Hence assume that $j=t-s$. 
Let $q \in Z'$ be such that $\dim q = \frac{s+1}2$,
defined by a prime $A$-ideal $\mathfrak q$ such that $A/(\mathfrak q +
\mathfrak q') = 0$ for some prime $A$-ideal $\mathfrak q'$ that is minimal
over $IA$. We need to show that $\homolSh_{Z'}^{t-s}(\Spec A,F)_q = 0$.
It follows from 
Lemma~\ref{lemma:vertexHypotheses}.\eqref{enum:nonemptyintersectionsII} 
that $\mathfrak q$ is not a minimal prime over $IA$. Write $B = (A_{\mathfrak
q})^{\mathrm{sh}}$. By
Lemma~\ref{lemma:vertexHypotheses}.\eqref{enum:localnminustwoII}, 
$t-s = (2n-1) - (2 \dim q -1) =  2 \dim B > \dim B + \arank IB$. 
By~\eqref{equation:stalkHomolSheaf},
and~\eqref{equation:ecdAra}
$\homolSh_{Z'}^{t-s}(\Spec A,F)_q = 
\homology_{Z'}^{t-s}(\Spec B,F|_{\Spec B}) = 
0$.

\underline{(iii)}: 
Note that it suffices to check for $j=t-s$ ($q=t-s$, 
in the notation of \cite{LyubEtCohDim93}*{Theorem~4.5}.)
Now use~\eqref{equation:lyubezniknhalfj}, 
and the fact that for all $s \geqslant 2c(I)$, $s-c(I) \geqslant 
\left\lfloor \frac{s+1}2\right\rfloor = 
\left\lfloor n- \frac{t-s}2 \right\rfloor$.
\end{proof}

\begin{lemma}
\label{propn:constantsOnClosed}
Under the hypotheses 
\ref{thm:subschLSTCI}.\eqref{enum:localnminustwo} and
\ref{thm:subschLSTCI}.\eqref{enum:nonemptyintersections},
$\etcd_\ell(Y \minus Z) \leqslant 2n-4$ 
for all closed subschemes $Y \subsetneq \projective^n_\Bbbk$.
\end{lemma}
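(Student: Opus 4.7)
I propose reducing the statement to the setting of Lemma~\ref{lemma:vertexIrredHypers} and then converting the local-cohomology vanishing there into the desired \'etale cohomological dimension bound via the affine-cone correspondence \eqref{equation:affineConeEtCD}.

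Replacing $Y$ by $Y_{\mathrm{red}}$ does not change \'etale cohomology, so I may assume $Y$ is reduced. If $\dim Y \leqslant n-2$, then $\etcd_\ell(Y \minus Z) \leqslant 2\dim Y \leqslant 2n-4$ by the general bound \cite{MilnEtale80}*{Theorem~VI.1.1}, and I am done. So assume $\dim Y = n-1$. Since any closed embedding $Y \minus Z \hookrightarrow W \minus Z$ with $W \subsetneq \projective^n_\Bbbk$ closed gives $\etcd_\ell(Y \minus Z) \leqslant \etcd_\ell(W \minus Z)$ (as $i_*$ is exact on abelian torsion sheaves for a closed embedding and preserves \'etale cohomology), I may enlarge $Y$ to contain $Z$ by replacing it with $W = Y \cup H$, where $H$ is any hypersurface containing $Z$ (which exists because $Z \subsetneq \projective^n_\Bbbk$). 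A Mayer--Vietoris argument applied to the irreducible decomposition of $W$---whose pairwise intersection terms have dimension $\leqslant n-2$ and are thus handled by the previously-treated low-dimensional case---reduces the problem to the case of a single irreducible, reduced hypersurface $Y$ containing $Z$.

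In this setting Lemma~\ref{lemma:vertexIrredHypers} yields $\homolSh^{\geqslant 2n-1}_{Z'}(Y', F)_v = 0$ for every $\ints/\ell$-module $F$ on $Y'$. By \eqref{equation:affineConeEtCD}, the desired bound $\etcd_\ell(Y \minus Z) \leqslant 2n-4$ is equivalent to $\homology^{\geqslant 2n-2}(Y' \minus Z', \pi^*F) = 0$ for every $\ints/\ell$-module $F$ on $Y \minus Z$. Fixing such an $F$ and an extension $G$ of $\pi^*F$ to $Y'$, the local-cohomology long exact sequence reduces this to proving $\homology^i(Y', G) = 0$ and $\homology^{i+1}_{Z'}(Y', G) = 0$ for $i \geqslant 2n-2$. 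The first vanishing is standard affine \'etale vanishing on the $n$-dimensional affine $Y'$, after reducing to constructible $G$ by a filtered colimit. For the second, I feed the vertex vanishing from Lemma~\ref{lemma:vertexIrredHypers} together with the support bound~\eqref{equation:lyubezniknhalfj} into the spectral sequence~\eqref{equation:spectralseq} $E_2^{ij} = \homology^i(Y', \homolSh^j_{Z'}(Y', G)) \Rightarrow \homology^{i+j}_{Z'}(Y', G)$.

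The main obstacle is precisely this last spectral-sequence step: the support bound \eqref{equation:lyubezniknhalfj} alone only gives $\homology^{>2n}_{Z'}(Y', G) = 0$, and one must use the two-degree improvement from Lemma~\ref{lemma:vertexIrredHypers} at the vertex, combined with a careful analysis of the dimensions of the supports of $\homolSh^j_{Z'}(Y', G)$ away from the vertex (using once more the absence of zero-dimensional components of $Z$), to obtain sharp vanishing in degrees $\geqslant 2n-1$.
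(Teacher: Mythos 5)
Your high-level skeleton matches the paper's: reduce to $Y$ an irreducible reduced hypersurface containing $Z$, pass to the affine cone via~\eqref{equation:affineConeEtCD}, reduce via the local-cohomology long exact sequence to showing $\homology^{\geqslant 2n-1}_{Z'}(Y', F) = 0$, and attack that via the spectral sequence~\eqref{equation:spectralseq} together with the vertex vanishing from Lemma~\ref{lemma:vertexIrredHypers}. Your extra reductions (dimension~$\leqslant n-2$, Mayer--Vietoris over the irreducible decomposition) are more explicit than the paper's citation of~\cite{MilnEtale80}*{VI.1.1} but accomplish the same thing and are fine.

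The genuine gap is exactly the step you flag as ``the main obstacle,'' and your proposed ingredient to close it is the wrong one. You suggest that the support analysis away from the vertex uses ``once more the absence of zero-dimensional components of $Z$.'' That is not strong enough: the dimension restriction on components only keeps $\dim\Supp\homolSh^j_{Z'}$ away from being full-dimensional, and the bound~\eqref{equation:lyubezniknhalfj} alone, as you note, only yields vanishing of $\homology^{>2n}_{Z'}$. What is actually needed is the full arithmetic force of hypothesis~\ref{thm:subschLSTCI}.\eqref{enum:localnminustwo}, channelled through Lemma~\ref{lemma:genericPtAra}: for $p \in Z'$ with $p \neq v$ one has $\arank I(\calO_{Y',p})^{\mathrm{sh}} \leqslant n-2$, and then~\eqref{equation:ecdAra} gives the stalk-wise vanishing $\homolSh^j_{Z'}(Y',F)_p = 0$ for $j > 2n-\dim p - 2$. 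Combined with Lemma~\ref{lemma:vertexIrredHypers} at the vertex, this yields $\homolSh^{\geqslant 2n-1}_{Z'}(Y',F) = 0$ outright, and for $j \leqslant 2n-2$ the improved bound $\dim\Supp\homolSh^j_{Z'}(Y',F) \leqslant \min\{\lfloor n - j/2\rfloor, 2n-j-2\}$. Only with the second term $2n-j-2$ does the spectral sequence kill $E_2^{i,j}$ for $i+j \geqslant 2n-1$; the Lyubeznik bound by itself leaves the diagonals $i+j \in \{2n-1, 2n\}$ with small $i$ alive. Without invoking Lemma~\ref{lemma:genericPtAra} and~\eqref{equation:ecdAra} away from the vertex, your argument does not close.
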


\begin{proof}
We may assume that $Y$ is an irreducible and reduced hypersurface that
contains $Z$ (see \cite {MilnEtale80}*{VI.1.1}, and the first
paragraph of its proof) and that that $Z$ is reduced. Now adopt 
Notation~\ref{notation:vertexIrredHypers}.

Let $G$ be any $\ints/\ell$-module on $(Y \minus Z)$. We need
to show that $\homology^{\geqslant 2n-3}((Y \minus Z), G) = 0$.
By~\eqref{equation:affineConeEtCD}
it suffices
to show that
$\homology^{\geqslant 2n-2}(Y' \minus Z', \pi^*G) = 0$. Let $F$ be a sheaf of
$\ints/\ell$-modules on $Y'$ such that $F|_{(Y' \minus Z')}  = 
\pi^*G$. Since $Y'$ is affine, 
$\homology^{\geqslant 2n-2}(Y' \minus Z', \pi^*G) = 0$ 
if and only if $\homology^{\geqslant 2n-1}_{Z'}(Y', F) = 0$; see \cite
{MilnEtale80}*{III.1.25}.

We obtain, from Lemma~\ref{lemma:genericPtAra}, that
if $p \in Z'$ and $p \neq v$ then 
$\arank I (\strSh_{Y', p})^{\mathrm{sh}} \leqslant n-2$.
Therefore, by~\eqref {equation:ecdAra}, for all $p \neq v\in Y'$, 
$\homolSh^j_{Z'}(Y', F)_p = 0$ for all $j > 2n-\dim p-2$. 
In particular, the stalks $\homolSh^{\geqslant 2n-1}_{Z'}(Y', F)_p$ are zero,
for all $p \neq v \in Y'$. Combining this 
with Lemma~\ref{lemma:vertexIrredHypers} 
and~\eqref{equation:lyubezniknhalfj}, 
we see that
\[
\homolSh^{\geqslant 2n-1}_{Z'}(Y', F) = 0 \;\text{and}\;
\dim \Supp \homolSh^j_{Z'}(Y', F) \leqslant 
\min\left\{\left\lfloor n - \frac j2 \right\rfloor\!,
2n-j-2\right\}, \;\text{for all}\; j \leqslant 2n-2.
\]
%
%
%
%
%
Since $Y'$ is affine, $\homology^i(Y', \homolSh^j_{Z'}(Y', F)) = 0$, for
all $i > \dim \Supp \homolSh^j_{Z'}(Y', F)$.
Using~\eqref{equation:spectralseq}, we conclude that 
$\homology^{\geqslant 2n-1}_{Z'}(Y', F) = 0$.
\end{proof}

\begin{discussion}
\label{discussion:constructible}
In order to determine \etale cohomological dimension, it suffices to
consider constructible sheaves. Suppose that $X$ is Noetherian.
A sheaf $F$ (of $\ints/\ell$-modules, for us) on $X$ is
\define{constructible} if every irreducible closed subscheme $Z$ of $X$
contains a nonempty open subscheme $U$ such that $F|_U$ is locally constant
and has finite stalks~\cite{MilnEtale80}*{V.1.8(b)}. 
Let $F$ be a constructible $\ints/\ell$-module on $X_\met$. Then $F$
has a finite filtration by subsheaves such that successive quotients are of
the form $j_!G$ where $j : U \to X$ is the inclusion of an irreducible,
locally closed and constructible subset, and $G$ is a locally constant and
constructible $\ints/\ell$-module on $U$; see \cite {SGA4}*{IX.2.5}, using
\cite {SGA4}*{IX.2.4.(i)} and \cite{MilnEtale80}*{V.1.8(b)}. 
\end{discussion}

\begin{proof}[Proof of Theorem~\ref{thm:subschLSTCI}] 
Write $U = \projective^n \minus Z$ and $r = \etcd_\ell(U)$. We want to show
that $r \leqslant 2n-3$.
Let $F$ be a
sheaf of $\ints/\ell$-modules on $U$ such that $\homology^r(U, F) \neq 0$. We
may assume that $F$ is constructible. By induction on the length of the
filtration in Discussion~\ref{discussion:filtrConstLowerShr}, we may assume
that $F = j_!G$ where $j : V \to U$ is the inclusion of an irreducible
locally closed subset, 
and $G$ is a locally constant and
constructible $\ints/\ell$-module on $V$. If $j$ is not an open immersion,
then either $\dim \Supp F \leqslant n-2$ or $\dim \Supp F = n-1$ and (since
$\dim Z \geqslant 2$ by~\ref{thm:subschLSTCI}.\eqref{enum:localnminustwo}) every
maximum-dimensional component of $\Supp F$ is non-proper. In either case,
$\etcd(U) \leqslant 2n-3$. (The latter case follows from~\cite
{LyubEtCohDim93}*{Theorem~3.5}.) Therefore we may assume that $j$ is an
open immersion.

Write $\eta$ for the geometric generic point of $U$ and $V$, and let $i :
\eta \to V$ be the inclusion map. Write $F_1  = \ker (F \to i_*i^*F)$ and
$F_2 = \coker (F \to i_*i^*F)$, and consider the exact sequence
\[
0 \to j_!F_1 \to j_!F \to j_!i_*i^*F \to j_!F_2 \to 0.
\]
on $U$. (Note that $j_!$ is exact~\cite {MilnEtale80}*{II.3.1.4}.)
Since the stalks $(j_!F)_\eta$ and $(j_!  i_*i^*F)_\eta$ are
identical, $j_!F_1$ and $j_!F_2$ are supported in dimension at most
$n-1$.  By Proposition~\ref{propn:constantsOnClosed},
$\homology^{\geqslant 2n-3}(U, j_!F_1) = \homology^{\geqslant 2n-3}(U,
j_!F_2) = 0$. Hence, to show that $r \leqslant 2n-3$, it suffices to
show that $\homology^{\geqslant 2n-2}(U, j_! i_*i^*F) = 0$.
Observe that $i_*i^*F$ is a constant sheaf on
$V$. Hence, without loss of generality, $F = j_!(\ints/\ell)$.

Let $\iota$ be the inclusion map $\eta \to U$, $F_1 = 
\ker (F \to \iota_*\iota^*F)$ and $F_2 =
\coker (F \to \iota_*\iota^*F)$.
Consider the exact sequence
\[
0 \to F_1 \to F \to \iota_*\iota^*F \to F_2 \to 0.
\]
Note that $\iota_*\iota^*F$ is a constant $\ints/\ell$-module on $U$ and that
$F_1$ and $F_2$ are supported in dimension at most $n-1$. By
Lemma~\ref{propn:constantsOnClosed}, 
$\homology^{\geqslant 2n-3}(U, F_1) = 
\homology^{\geqslant 2n-3}(U, F_2) = 0$, while
by
hypothesis~\ref{thm:subschLSTCI}.\eqref{enum:constecd}, 
$\homology^{\geqslant 2n-2}(U, \iota_*\iota^*F) = 0$.
Hence $\homology^{\geqslant 2n-2}(U, F) = 0$.
\end{proof}

\subsection*{Affine coordinate subspace arrangements}

We now turn to the complements of affine coordinate subspace arrangements.
We will relate their \etale cohomology groups to graded Betti numbers of
the ideals defining the arrangements. Then the \etale cohomological
dimension can be expressed in terms of extremal Betti numbers.

\begin{notation}
\label{notation:AffineECD}
Let $S = \ints[x_0, \ldots, x_n]$ and let $\mathfrak a$ be a square-free
monomial $S$-ideal. Let $R = \Bbbk[x_0, \ldots, x_n]$ (as usual) and $R' =
\ints/\ell[x_0, \ldots, x_n]$. Let $I = \mathfrak aR$ and 
$J = \mathfrak aR'$.
Let $Z$ be the closed
$\Bbbk$-subscheme of $\affine^{n+1}_\Bbbk$ defined by $I$. 
\end{notation}

\begin{thm}
\label{thm:constEtcd}
With the notation as above,
\[
\max\{r: \homology^r((\affine^{n+1}_\Bbbk \minus Z)_\met, \ints/\ell) \neq 0\} = 
\max \{i+j : \beta^{R'}_{i,j}(J) \neq 0\}
\]
\end{thm}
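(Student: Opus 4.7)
My plan is to express both sides of the claimed equality as the same extremal index extracted from reduced simplicial cohomology on intervals of a lattice attached to $J$, and then to match them via simplicial Alexander duality.

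For the left-hand side, since $\Bbbk$ is separably closed of characteristic different from $\ell$, the affine space $\affine^{n+1}_\Bbbk$ is acyclic for the constant sheaf $\ints/\ell$ in \etale cohomology; the long exact sequence of the pair $(\affine^{n+1}_\Bbbk, Z)$ therefore gives
\[
\homology^r(U_\met, \ints/\ell) \cong \homology^{r+1}_{Z,\met}(\affine^{n+1}_\Bbbk, \ints/\ell)
\]
for $r \geqslant 1$, where $U = \affine^{n+1}_\Bbbk \minus Z$, converting the question into one about local \etale cohomology with support on $Z$. Writing $Z = V(P_1) \cup \cdots \cup V(P_m)$ as the union of the (coordinate linear) varieties of the minimal primes of $J$, I would then run the Mayer--Vietoris spectral sequence
\[
E_1^{p,q} = \bigoplus_{|\sigma| = p+1} \homology^q_{V(P_\sigma),\met}(\affine^{n+1}_\Bbbk, \ints/\ell) \;\Longrightarrow\; \homology^{p+q}_{Z,\met}(\affine^{n+1}_\Bbbk, \ints/\ell),
\]
where $P_\sigma = \sum_{k \in \sigma} P_k$. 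By \etale cohomological purity each $E_1$-summand is concentrated in degree $q = 2\height P_\sigma$ with value $\ints/\ell$; its differentials encode the order-complex combinatorics of the join-subposet $L^\circ$ of $\Spec R'$ generated by $P_1, \ldots, P_m$. The spectral sequence then collapses to a Goresky--MacPherson-type decomposition of $\homology^{\ast}_{Z,\met}(\affine^{n+1}_\Bbbk, \ints/\ell)$ in terms of the reduced simplicial cohomology of the open intervals $(\hat 0, \mathfrak q) \subset L^\circ$, with a contribution $2 \height \mathfrak q$ to the total degree from each $\mathfrak q$.

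For the right-hand side, Hochster's formula---equivalently, the Gasharov--Peeva--Welker formula on the lcm-lattice $L_J$ of $J$---expresses each nonzero multigraded Betti number $\beta^{R'}_{i, \alpha}(J)$, supported in squarefree multidegrees $\alpha$, as the $\ints/\ell$-dimension of a reduced simplicial (co)homology group either of the open interval $(\hat 0, \alpha)$ in $L_J$ or, dually, of an appropriate subcomplex of the Stanley--Reisner complex associated to $J$. Simplicial Alexander duality interchanges the roles of minimal monomial generators and minimal primes of $J$, producing an isomorphism between the reduced cohomology appearing above on the \etale side (indexed by $L^\circ$) and the reduced cohomology appearing here on the Betti side (indexed by $L_J$).

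Finally, I would match the extremal indices: under the Alexander-duality dictionary, each extremal $i + j$ on the Betti side corresponds to an extremal $2\height \mathfrak q + (\text{top nonvanishing cohomological degree of }(\hat 0, \mathfrak q))$ on the \etale side, and the shift $r \mapsto r+1$ from the first step accounts for the remaining offset. The principal obstacle is precisely the simultaneous bookkeeping of the purity shifts, the Mayer--Vietoris differentials, and the Alexander-duality correspondence between $L_J$ and $L^\circ$. Individually each ingredient is classical; what must be checked is that when they are composed, the cohomological degrees and height contributions combine in exactly the right way to identify the two maxima. Once this alignment is carried out, equality of the two extremal statistics is forced.
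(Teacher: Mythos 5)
Your plan has the right overall shape, and it runs structurally parallel to the paper's proof, but two of your steps are not routine and are in fact the two substantive inputs that the paper imports as citations; as written, your proposal asserts them rather than proving them.

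First, the claim that the Mayer--Vietoris spectral sequence ``collapses to a Goresky--MacPherson-type decomposition'' is precisely Yan's \'etale analogue of the Goresky--MacPherson formula (\cite{YanEtaleGorMcp00}), which is what the paper cites directly. Deriving that decomposition from your $E_1$-page is genuinely hard: the $E_1$-terms indexed by subsets $\sigma$ with the same span $P_\sigma$ must be reorganized along the intersection lattice, one must show the resulting sequence degenerates at the right page, and one must identify the surviving terms with reduced simplicial cohomology of open intervals in that lattice. None of this follows formally from purity plus ``the differentials encode order-complex combinatorics.'' If you intend to invoke Yan's theorem at this point you should say so; if you intend to reprove it, the argument needs to be supplied, and that is the bulk of the work.

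Second, the closing step ``once this alignment is carried out, equality of the two extremal statistics is forced'' skips the actual identity that makes the statement true. Carrying out the Goresky--MacPherson/GPW bookkeeping lands you (as it does in the paper) on
\[
\max\{r : \homology^r(U_\met,\ints/\ell)\neq 0\} \;=\; \max\{\,2j-i : \beta^{R'}_{i,j}(R'/J^{\vee})\neq 0\,\},
\]
and the passage from the statistic $2j-i$ on the Alexander-dual quotient $R'/J^{\vee}$ to the statistic $i+j$ on the ideal $J$ is exactly the Bayer--Charalambous--Popescu duality of extremal Betti numbers, $\beta^{R'}_{i,j}(R'/J^{\vee}) = \beta^{R'}_{j-i,j}(J)$ for extremal $(i,j)$ (\cite{BCPExtremal99}), together with the elementary observation that the maximum of $2j-i$ (respectively $i+j$) over nonzero Betti numbers is always achieved at an extremal one. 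Your Alexander-duality step as stated relates reduced cohomology groups of intervals, which gives you Betti numbers of $R'/J^{\vee}$; it does not by itself transport ``$2j-i$'' into ``$i+j$ for $J$.'' Without naming and using the BCP correspondence, the two maxima are not obviously equal, and the proof does not close.
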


\begin{notation}
Let $\mathcal A$ be the coordinate subspace arrangement defined by $I$
inside $\Bbbk^{n+1}$. 
Let $(L_{\mathcal A}, \preceq) $ be the intersection lattice of $\mathcal
A$, partially ordered by \emph{inclusion}: $v \preceq w$ if $v \subseteq
w$.
By $(-)^\vee$, we denote the operation of taking the Alexander dual of a
square-free monomial ideal.
Let $\mu : L_{\mathcal A} \longrightarrow L_{I^\vee}$ be a map of posets
with $\mu(v) = \prod_{x_i(v) = 0} x_i$. (See Definition~\ref{defn:lcmLattice}
for the lcm-lattice $L_{I^\vee}$ of $I^\vee$.)
Let $(P, \preceq)$ be a poset. Write $\Delta(P)$ for its order complex,
\textit{i.e.}, the simplicial complex on $P$ with faces $\{v_1, \ldots,
v_r\}$ whenever $v_1 \precneqq v_2 \precneqq \cdots \precneqq v_r$. For $v
\in P$, write $P_{\succeq v}$ for the induced poset on $\{w \in P : v
\preceq w\}$, and $P_{\succ v}$ for the induced poset on $\{w \in P : v
\precneqq w\}$.
Write $U = \affine^{n+1}_\Bbbk \minus Z$. 
\end{notation}

The following was, perhaps, first observed by V.~Gasharov, I.~Peeva and
V.~Welker~\cite{GPWSubspArr02}.

\begin{lemma}
\label{lemma:dualityLattices}
The map $\mu$ defines a
duality between $L_{\mathcal A}$ and $L_{I^\vee}$.
\end{lemma}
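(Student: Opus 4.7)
The plan is to identify both lattices with the same family of subsets of $\{0,\ldots,n\}$ and show that $\mu$ is, up to this identification, the map reversing inclusion. For a coordinate subspace $v$ of $\affine^{n+1}_\Bbbk$, let $\sigma(v)\subseteq\{0,\ldots,n\}$ denote the set of indices $i$ for which $x_i$ vanishes on $v$, so that $v$ is determined by $\sigma(v)$ and $v\subseteq w$ iff $\sigma(w)\subseteq\sigma(v)$. By construction $\mu(v)=\prod_{i\in\sigma(v)}x_i$, so the assertion will reduce to matching up the admissible subsets $\sigma(v)$ with the exponent supports of the monomials in $L_{I^\vee}$.

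I would begin by identifying the coatoms of $L_{\mathcal A}$ with the atoms of $L_{I^\vee}$. The irreducible components of $\Var(I)$ (which compose $\mathcal A$) are in bijection with the minimal primes of $I$; writing these as $\frakp_{B_1},\ldots,\frakp_{B_r}$ with $\frakp_B=(x_j:j\in B)$, and setting $V_i\defeq\Var(\frakp_{B_i})$, one has $\sigma(V_i)=B_i$. On the monomial side, the defining property of the Alexander dual of a squarefree monomial ideal gives $I^\vee=(x_{B_1},\ldots,x_{B_r})$, where $x_B\defeq\prod_{j\in B}x_j$, so these are precisely the atoms of $L_{I^\vee}$. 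Hence $\mu(V_i)=x_{B_i}$ sets up a bijection between the coatoms of $L_{\mathcal A}$ and the atoms of $L_{I^\vee}$.

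The next step is to propagate this to the full lattices. An arbitrary element of $L_{\mathcal A}$ has the form $v=V_{i_1}\cap\cdots\cap V_{i_k}$ (the empty intersection being $\affine^{n+1}_\Bbbk$), so $\sigma(v)=B_{i_1}\cup\cdots\cup B_{i_k}$, and therefore $\mu(v)=\lcm(x_{B_{i_1}},\ldots,x_{B_{i_k}})$. The right-hand side ranges over all of $L_{I^\vee}$ as the index subset varies (by definition of $L_{I^\vee}$; see Definition~\ref{defn:lcmLattice}), giving surjectivity; injectivity is immediate since $v$ is recovered from $\mu(v)$ via $\sigma(v)=\{i:x_i\mid\mu(v)\}$. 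Finally, the chain $v\preceq w\iff\sigma(w)\subseteq\sigma(v)\iff\mu(w)\mid\mu(v)\iff\mu(w)\preccurlyeq\mu(v)$ gives the order reversal and completes the duality.

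There is no essential obstacle: the argument is a translation exercise between the two standard combinatorial descriptions, one for the intersection lattice of a coordinate subspace arrangement and one for the lcm-lattice of a squarefree monomial ideal. The only piece requiring any care is the bookkeeping, via Alexander duality, that aligns the minimal primes $\frakp_{B_i}$ of $I$ with the monomial generators $x_{B_i}$ of $I^\vee$.
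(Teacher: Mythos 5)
Your proof is correct and follows essentially the same route as the paper's (sketched) argument: match the coatoms of $L_{\mathcal A}$ (irreducible components of $Z$, i.e., minimal primes of $I$) with the atoms of $L_{I^\vee}$ (minimal generators of $I^\vee$) via Alexander duality, and then observe that intersections of components correspond, through the support map $\sigma$, to unions of supports, hence to lcms of the corresponding generators. You simply spell out the bijectivity and order reversal in more detail than the paper's two-sentence sketch.
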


\begin{proof}[Sketch] (See, also,~\cite {GPWSubspArr02}*{Proof of
Theorem~3.1}.) The coatoms of $L_{\mathcal A}$ correspond to the
irreducible components of $Z$, which correspond, under $\mu$, to the
minimal generators of $I^\vee$, \textit{i.e.}, the atoms of 
$L_{I^\vee}$. The flats of $L_{\mathcal A}$ are
intersections of the coatoms; this corresponds to taking the lcm of the
various subsets of $\gens(I)$.
\end{proof}

\begin{defn}[\cite {BCPExtremal99}*{p.~498}]
A Betti number $\beta_{i,j}(M) \neq 0$ is \define{extremal} if
$\beta_{k,l}(M) = 0$ for all $(k,l)$ such that $(k,l) \neq (i,j)$, $k
\geqslant
i$ and $j-i \geqslant l-k$.
\end{defn}

\begin{proof}[Proof of Theorem~\ref{thm:constEtcd}]
Write $L = L_{\mathcal A}$ and $L' = L_{I^\vee}$. For $v \in L$, write
$d(v) = \dim_\Bbbk v$.
Then,
\[
\homology^r(U_\met, \ints/\ell) = \bigoplus_{v \in L}
\homology^{2n-r-2d(v)-1}\left(\Delta(L_{\succeq v}), \Delta(L_{\succ v});
\ints/\ell\right).
\]
This is~\cite {YanEtaleGorMcp00}*{Corollary~2, p.~311}, taken along with the
discussion in \cite {YanEtaleGorMcp00}*{Section~2, pp.~306-7}.
From the long exact sequence (we suppress the group
$\ints/\ell$ of coefficients)
\[
\cdots \rightarrow 
\homology^r(\Delta(L_{\succeq v}), \Delta(L_{\succ v}))
\rightarrow
\homology^r(\Delta(L_{\succeq v})) \rightarrow 
\homology^r(\Delta(L_{\succ v})) \rightarrow
\homology^{r+1}(\Delta(L_{\succeq v}), \Delta(L_{\succ v}))
\rightarrow \cdots
\]
and the fact that $\Delta(L_{\succeq v})$ is a cone, we obtain the
following:
\[
\homology^{r+1}(\Delta(L_{\succeq v}), \Delta(L_{\succ v}))
\simeq
\rhomo^r(\Delta(L_{\succ v})), \;\text{for
all}\; r \geqslant 0.
\]
From Lemma~\ref {lemma:dualityLattices} we see that $\Delta(L_{\succ v})$
and $\Delta((\hat 0, \mu(v))_{L'})$ are isomorphic. Therefore, by~\cite
{GPWLCMLattice99}*{Theorem~2.1}, we see that
\[
\dim_\Bbbk \homology^r(U_\met, \ints/\ell) = 
\sum_{v \in L}
\beta^{R'}_{2n-r-2d(v), \deg \mu(v)}(R'/J^\vee).
\]
Write $t = 
\max\{r: \homology^r((\affine^{n+1}_\Bbbk \minus Z)_\met, \ints/\ell) \neq
0\}$. Note that $n-d(v) = \deg \mu(v)$. Hence 
\begin{align*}
t & = \max \{r : \text{there exists}\; m \in L' \;\text{such
that}\; \beta^{R'}_{2\deg m-r, \deg m}(R'/J^\vee) \neq 0\} \\
& = \max \{2j-i : \beta^{R'}_{i, j}(R'/J^\vee) \neq 0\} \\
& = \max \{2j-i : \beta^{R'}_{i, j}(R'/J^\vee) \;\text{is an extremal Betti
number of}\; (R'/J^\vee)\}.
\end{align*}
(We see the last equality as follows: Pick $i,j$ such that $\beta^{R'}_{i,
j}(R'/J^\vee) \neq 0$ and $2j-i$ is maximized. If $\beta^{R'}_{i,
j}(R'/J^\vee)$ were not an extremal Betti number, then there would be $i', j'$
such that 
\begin{inparaenum}
\item $\beta^{R'}_{i',j'}(R'/J^\vee) \neq 0$,
\item \label{enum:notTopLetCorner}
$i' \geqslant i$ and $j'-i' \geqslant j-i$, and,
\item at least one of inequalities in \eqref{enum:notTopLetCorner} is strict.
\end{inparaenum}
Then $2j'-i' > 2j-i$, a contradiction.)

Now,~\cite {BCPExtremal99}*{Theorem~2.8} gives
the following: if
$\beta^{R'}_{i,j}(R'/J^\vee)$ is an extremal Betti number of $R'/J^\vee$, 
then
$\beta^{R'}_{j-i,j}(J) = \beta^{R'}_{i,j}(R'/J^\vee)$, and
$\beta^{R'}_{j-i,j}(J)$ is an extremal Betti number of $J$. Therefore
\begin{align*}
t & = \max \{j+i : \beta^{R'}_{i, j}(R'/J^\vee) \;\text{is an extremal Betti
number of}\; (R'/J^\vee)\} \\
& = \max \{j+i : \beta^{R'}_{i, j}(R'/J^\vee) \neq 0\}.
\end{align*}
This proves the theorem.
\end{proof}

In~\cite {GPWSubspArr02}*{Theorem~3.1},
Gasharov, Peeva and Welker 
prove an
analogue of the theorem for the singular cohomology groups of affine real
subspace arrangements.

\subsection*{Reisner's ideal and the conjecture of Lyubeznik}

We present a counterexample to Conjecture~\ref{conj:lyubeznikECD}
which asserts that 
for any $\Bbbk$-scheme $U$, $\etcd(U) \geqslant \dim U + \qccd(U)$. 
We were unable to find such an example in the literature.

\begin{example}
\label{example:reisnerLyubeznik}
Let $\charact \Bbbk = 2$. Let $R
= \Bbbk[x_0, \ldots, x_5]$, $I$ the monomial $R$-ideal of the minimal
triangulation
of $\mathbb{RP}^2$ given by G.~Reisner; see, \textit{e.g.},~\cite
{BrHe:CM}*{Section~5.3}. Then $\height I = 3$, $\projdim R/I = 4$ and
$\arank I = 4$~\cite {YanEtaleGorMcp00}*{Example~2}.  Let $Z$ be the closed
$\Bbbk$-scheme of $\projective^5_\Bbbk$ defined by $I$ and 
set $U = \projective^5_\Bbbk \minus Z$. Write $\calI$ for the ideal sheaf of
$Z$.
Note that $Z$ is a $2$-dimensional
projective scheme.

Since $\projdim_R R/I = 4$, $\cohdim(I, R)  = 4$
by~\cite{LyubRegSeqLOCO84}*{Theorem~1}.
Thus, by~\eqref{equation:cohdimqccd}, $\qccd (U) = 3$.
Hence $\dim U + \qccd(U) = 8$.
We apply
Theorem~\ref{thm:subschLSTCI} to show that $\etcd_\ell(U) \leqslant 7$ for
all prime numbers $\ell \neq 2$. Let $\ell \neq 2$ be a prime number.

To show that the
hypothesis~\ref{thm:subschLSTCI}.\eqref{enum:localnminustwo} holds, it
suffices to show that the stalk $\calI_p$ is a
set-theoretic complete intersection (\textit{i.e.}, $\arank \calI_p =
\height \calI_p$)
for all $p \in Z$. Let $p \in Z$; without loss of generality,
$x_0 \neq 0$ at $p$. We will show that $IR_{x_0}$ is a 
set-theoretic complete intersection.
This follows from noting that
$(I \!:_R\! x_0) = \sqrt{(x_1x_2, x_5x_1+x_3x_4, x_4x_5+x_2x_3)}$ (see,
\textit{e.g.},~\cite {BariLinAlg08}*{p.~4545}). For
hypothesis~\ref{thm:subschLSTCI}.\eqref{enum:nonemptyintersections}, note 
that the irreducible
components are linear subspaces of $\projective^5_\Bbbk$ 
(so they are analytically
irreducible) and that they have pairwise non-empty intersection (for
instance, consider the triangulation given in \cite
{BrHe:CM}*{Section~5.3}).
To see that hypothesis~\ref{thm:subschLSTCI}.\eqref{enum:constecd} is
satisfied,
let $R'$ and $J$ be as in Notation~\ref{notation:AffineECD}.
Write $Z'$ for the affine cone over $Z$.
Since $J$ is a height three Cohen--Macaulay ideal generated by cubics
and has a linear resolution, it has a single extremal Betti number
$\beta_{2,5}(J)$, so, by 
Theorem~\ref{thm:constEtcd}.
$\homology^7((\affine^6_\Bbbk \minus Z')_\met, \ints/\ell)
\neq 0 = \homology^{\geqslant 8}((\affine^6_\Bbbk \minus Z')_\met, \ints/\ell)$.
By~\eqref{equation:affineConeEtCD},
$\homology^6(U_\met, \ints/\ell)
\neq 0 = \homology^{\geqslant 7}(U_\met, \ints/\ell)$.
Now, by Theorem~\ref{thm:subschLSTCI}, $\etcd_\ell(U) \leqslant 7$.
\end{example}

\begin{remarkwithoutbox}
\label{rem:2vs3}
\begin{asparaenum}
\item Theorems~5.3.(ii), and~6.3.(ii) of~\cite {LyubEtCohDim93} reach the 
conclusion of Lemma~\ref{lemma:vertexIrredHypers}, 
but with stronger hypotheses, which do not apply to
Example~\ref{example:reisnerLyubeznik}. 

\item \label{enum:reisnerLyubExactECD}
On Example~\ref{example:reisnerLyubeznik}:
In view of the fact that
$\homology^{7}(U_\met, \ints/\ell) = 0$ one might wonder whether
$\etcd(U_\met)$ equals $6$. However, 
let $Z_1$ be the union of all the $2$-dimensional coordinate
subspaces of $\projective^5_\Bbbk$; hence $Z_1$ is defined by 
$\bigcap_{0 \leqslant i < j < k \leqslant 5}(x_i,x_j,x_k)$. 
Theorem~\ref{thm:constEtcd}, together 
with~\eqref{equation:affineConeEtCD},
implies that $\homology^7((\projective^5_\Bbbk \minus Z_1), \ints/\ell)
\neq 0 = 
\homology^{\geqslant 8}((\projective^5_\Bbbk \minus Z_1), \ints/\ell)$.
Since $Z \subseteq Z_1$, we see that $\etcd_\ell(U) = 7$.

\item \label{rem:2vs3detail}
More on Example~\ref{example:reisnerLyubeznik}:
Instead of taking $\charact \Bbbk = 2$, we let $\charact \Bbbk = p > 0$.
(As usual, $\ell \neq p$ is a prime number.)
Let $Z \subseteq \projective^5_\Bbbk$ and 
$Z' \subseteq \affine^6_\Bbbk$ be defined by the monomial $R$-ideal $I$ of
the minimal triangulation of $\mathbb{RP}^2$. Let $J$ be as in 
Notation~\ref{notation:AffineECD}.  The extremal Betti numbers of
$J$ are
\begin{alignat*}{2}
\beta_{2,6}(J) &\;\text{and}\; \beta_{3,6}(J), &\qquad\text{if}\; \ell = 2; \\
\beta_{2,5}(J), &&\qquad\text{if}\; \ell > 2.
\end{alignat*}
Suppose that $p = 2$ (and $\ell > 2$). Then
{\makeatletter \def\theenumii{\@roman\c@enumii} \makeatother
\begin{compactenum}
\item $\homology^7(\affine^6 \minus Z', \ints/\ell) \neq 0 =
\homology^{\geqslant 8}(\affine^6 \minus Z', \ints/\ell)$ and
$\homology^6(\PP^5\minus Z,\ints/\ell) \neq 0
= \homology^{\geqslant 7}(\PP^5\minus Z,\ints/\ell)$.
(Example~\ref{example:reisnerLyubeznik}.)
\item  $\etcd(\affine^6\minus Z')=\etcd_\ell(\affine^6\minus Z')=8$ and
$\etcd(\PP^5\minus Z)=\etcd_\ell(\PP^5\minus Z)=7$.
(See~\eqref{enum:reisnerLyubExactECD} above.)
\end{compactenum}
On the other hand, if $p>2$ then
{\makeatletter\def\theenumii{\@roman\c@enumii${}^\prime$}\makeatother
\begin{compactenum}
\item 
$\homology^9(\affine^6 \minus Z', \ints/2) \neq 0 =
\homology^{\geqslant 10}(\affine^6 \minus Z', \ints/2)$ and
$\homology^8(\PP^5\minus Z,\ints/2) \neq 0
= \homology^{\geqslant 9}(\PP^5\minus Z,\ints/2)$. 
(Theorem~\ref{thm:constEtcd} gives the affine case; the projective case then
follows from~\eqref{equation:affineConeEtCD}.)
\item
$\etcd(\affine^6\minus Z') = \etcd_2(\affine^6\minus Z')=9$ and 
$\etcd(\PP^5 \minus Z) = \etcd_2(\PP^5\minus Z)=8$.
(The projective case follows from Theorem~\ref{thm:subschLSTCI}, and it
gives the affine case by~\eqref{equation:affineConeCDs}.) 
\end{compactenum}}}
In particular, this example shows that topological data on closed
subschemes and combinatorial data on their intersections do not suffice to
predict the \etale cohomological dimension of the complement of their
union. \hfill\qedsymbol
\end{asparaenum}
\end{remarkwithoutbox}

\begin{remark}
\label{remark:hypForProjComplement}
Hypothesis~\ref{thm:subschLSTCI}.\eqref{enum:localnminustwo} is
crucial to the proof. Even with
it, the other two hypotheses 
are independent of each other. In
Remark~\ref{rem:2vs3}.\eqref{rem:2vs3detail}, we saw an example (with $p >
2$ and $\ell =2$) in which 
the hypothesis~\ref{thm:subschLSTCI}.\eqref{enum:nonemptyintersections}
holds, but 
the hypothesis~\ref{thm:subschLSTCI}.\eqref{enum:constecd} does not.

The following is an example
where~\ref{thm:subschLSTCI}.\eqref{enum:constecd} is satisfied,
but~\ref{thm:subschLSTCI}.\eqref{enum:nonemptyintersections} is not.
Let $Z \subseteq \projective^5_\Bbbk$ be defined by $I =
(x_0x_1, x_2x_3, x_4x_5, x_0x_3, x_0x_5, x_2x_5)$, and let $Z'$ be the
affine cone over $Z$. (There are no restrictions on $\charact \Bbbk$ and
$\ell$, except that $\ell \neq \charact \Bbbk$.)
Then $Z$ is a codim $3$
set-theoretic complete intersection (see, \textit{e.g.},~\cite
{KummBipIdeals08}*{Theorem 1.3}). Its two irreducible components defined by
$(x_0, x_2, x_4)$ and $(x_1, x_3, x_5)$ are disjoint. The ideal $J$ has a
linear resolution, so it has one extremal Betti number $\beta_{2,4}(J)$. 
Hence
$\homology^{\geqslant 7}((\affine^6 \minus Z')_\met, \ints/\ell) = 0$, so
$\homology^{\geqslant 6}((\projective^n \minus Z)_\met, \ints/\ell)  =0$.
\end{remark}

\section{Upper bounds from the lcm-lattice}
\label{sec:upperbounds}

Let $L$ be the lcm-lattice of $I$.
Using rooting complexes (see~\cite{BjoZieBrokenCkts91}*{Section~3}
and~\cite{NovikRooted02}) we obtain an upper bound for $\arank_h I$.  Write
$L_1$ for the set of atoms (\textit{i.e.}, elements of height one) of $L$;
this is the set of monomial minimal generators of $I$.  For $F \subseteq
L_1$, let $\lambda_F = \lcm F$.  For a simplicial complex $\Gamma$, denote
the set of $r$-dimensional faces by $\Gamma^{(r)}$.

\begin{defn}
\label{defn:rootingmaps}
A \emph{rooting map} is a function $\rho : L \rightarrow L_1$ such that for
every $m \in L$, $\rho(m) | m$ and for all $m' \in [\rho(m), m], \rho(m') =
\rho(m)$. 
We say that $G \subseteq L_1$ is \emph{unbroken} if $\rho(\lambda_G) \in
G$. The \emph{rooting complex} of $L$ corresponding to $\rho$ is $\Gamma_{I,
\rho} = \{ F \subseteq L_1 : G \;\text{is unbroken, for all}\; G \subseteq
F\}$. 
\end{defn}

\begin{lemma}[\protect{\cite{BjoZieBrokenCkts91}*{Theorem~3.2(2)},~\cite%
{NovikRooted02}*{Proposition~1(1)}}]
\label{thm:rootCxIsCone}
The simplicial complex
$\Gamma_{I,\rho}$ is a cone with apex $\rho(\lambda_{L_1})$.
\end{lemma}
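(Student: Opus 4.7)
The plan is to show that $a \defeq \rho(\lambda_{L_1})$ lies in every face of $\Gamma_{I,\rho}$; equivalently, that $F \cup \{a\} \in \Gamma_{I,\rho}$ whenever $F \in \Gamma_{I,\rho}$. This is precisely the statement that $\Gamma_{I,\rho}$ is a cone with apex $a$.

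First I would check that $a$ is a vertex, i.e.\ that $\{a\}$ itself is unbroken. Since $a \in L_1$ we have $\rho(a) \in L_1$ and $\rho(a) \mid a$; because distinct atoms of $L_I$ (being minimal monomial generators of $I$) are incomparable under divisibility, this forces $\rho(a) = a$. So $\rho(\lambda_{\{a\}}) = \rho(a) = a \in \{a\}$, and the same argument shows every singleton in $L_1$ is unbroken.

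Now fix $F \in \Gamma_{I,\rho}$, and let $G \subseteq F \cup \{a\}$; I need to show that $G$ is unbroken. If $a \notin G$ then $G \subseteq F$, which is already unbroken by hypothesis. Otherwise write $G = H \cup \{a\}$ with $H \subseteq F$. Since $a \in G$, $a \mid \lambda_G$; and since $G \subseteq L_1$, $\lambda_G \mid \lambda_{L_1}$. Hence
\[
\lambda_G \in [a, \lambda_{L_1}] = [\rho(\lambda_{L_1}), \lambda_{L_1}].
\]
By the second defining property of a rooting map (applied to $m = \lambda_{L_1}$ and $m' = \lambda_G$), we conclude $\rho(\lambda_G) = \rho(\lambda_{L_1}) = a \in G$, so $G$ is unbroken.

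There isn't really a hard step here once the definitions are unwound: the whole argument is driven by the ``idempotence'' clause $\rho(m') = \rho(m)$ on intervals $[\rho(m), m]$, applied to the top element $\lambda_{L_1}$ of $L$. The only minor point to verify carefully is that $a$ itself qualifies as a vertex, which reduces to the incomparability of atoms in the lcm-lattice.
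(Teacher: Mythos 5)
Your proof is correct. Note that the paper does not supply its own argument for this lemma---it is quoted from Bj\"orner--Ziegler and Novik---but the argument you give is precisely the standard one found there: take $a=\rho(\lambda_{L_1})$, observe $\rho(a)=a$ because distinct atoms of the lcm-lattice (minimal generators) are incomparable under divisibility, and then for any unbroken $F$ and $G\subseteq F\cup\{a\}$ with $a\in G$ use $\lambda_G\in[\rho(\lambda_{L_1}),\lambda_{L_1}]$ together with the interval clause in the definition of a rooting map to get $\rho(\lambda_G)=a\in G$.
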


%

\begin{lemma}
\label{thm:rooCxVarRestr}
Let $I' = (I \cap \Bbbk[x_1, \ldots, \widehat{x_r}, \ldots, x_n])R$. Let
$L'$ be the lcm-lattice of $I'$, $L'_1$, the set of atoms of $L'$, and $\rho' := \rho|_{L'}$. Then
\begin{inparaenum}
\item \label{enum:rootMapRestr} $\rho'$ is a rooting map of $L'$, and
\item \label{enum:rootCxRes} $\Gamma_{I', \rho'} = \Gamma_{I, \rho}
\big|_{L'_1}$.
\end{inparaenum}
\end{lemma}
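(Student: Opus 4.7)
The plan is to verify the two claims by observing that $L'$ sits naturally as a sub-lattice of $L$ via the identification of atoms: $L'_1$ is the set of minimal monomial generators of $I$ that do not involve $x_r$, so $L'_1 \subseteq L_1$, and every element $m \in L'$, being an lcm of a subset of $L'_1$, is an element of $L$ which does not involve $x_r$. The whole proof is then bookkeeping around the observation that $\rho(m)$ divides $m$, so if $x_r$ does not divide $m$ then $x_r$ does not divide $\rho(m)$, which in turn means $\rho$ restricts to a map $L' \to L'_1$.

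For part \eqref{enum:rootMapRestr}, I would first invoke the divisibility remark above to conclude that $\rho'$ is a well-defined map $L' \to L'_1$. The condition $\rho'(m) \mid m$ is then inherited immediately from $\rho$. For the interval condition, I would note that $[\rho'(m), m]_{L'} \subseteq [\rho(m), m]_L$, because any $m' \in L'$ with $\rho(m) \preccurlyeq m' \preccurlyeq m$ in the divisibility order is also such an element in $L$; thus for any such $m'$, $\rho'(m') = \rho(m') = \rho(m) = \rho'(m)$, using the rooting property of $\rho$.

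For part \eqref{enum:rootCxRes}, the key observation is that if $G \subseteq L'_1$, then $\lambda_G \in L'$ (since it does not involve $x_r$), and consequently $\rho'(\lambda_G) = \rho(\lambda_G)$. Therefore the condition ``$G$ is unbroken in $L'$ with respect to $\rho'$'' is literally the same condition as ``$G$ is unbroken in $L$ with respect to $\rho$''. From the definition of the rooting complex as consisting of those $F$ all of whose subsets are unbroken, one then reads off directly: $F \subseteq L'_1$ lies in $\Gamma_{I',\rho'}$ if and only if all of its subsets are unbroken (in either sense), if and only if $F \in \Gamma_{I,\rho}$, i.e. $F \in \Gamma_{I,\rho}\big|_{L'_1}$.

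There is no real obstacle here; both statements are a matter of recognising that deletion of a variable corresponds to passing to the sublattice of monomials not divisible by $x_r$, and that the rooting property and the unbroken condition are both local to intervals and lcm's computed within this sublattice, so they are preserved under restriction. The only point that needs slight care is confirming $\rho'(m) \in L'_1$ rather than merely in $L_1$, which is where the divisibility axiom $\rho(m) \mid m$ is used.
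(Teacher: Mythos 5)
Your proof is correct and takes essentially the same route as the paper: you identify $L'$ with the subposet of $L$ consisting of lcm's of generators not divisible by $x_r$, observe that $\rho(m)\mid m$ forces $\rho$ to carry $L'$ into $L'_1$, and then note that the unbroken condition for $G\subseteq L'_1$ is computed identically in $L'$ and $L$. The paper states part~(a) as ``immediate'' and for part~(b) gives the same chain of equivalences you do, so you have simply spelled out the same argument in a little more detail.
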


\begin{proof}
The proof of \eqref{enum:rootMapRestr} is immediate. To prove
\eqref{enum:rootCxRes}, we need to show that for all $F \subseteq L'_1$,
$F$ is a face of $\Gamma_{I', \rho'}$  if and only if $F$ is a face of
$\Gamma_{I, \rho}$. Let $F \subseteq L'_1$. Then $F$ is a face of
$\Gamma_{I', \rho'}$ if and only if $\rho'(\lambda_G) \in G$ for all $G
\subseteq F$, which holds if and only if $\rho(\lambda_G) \in G$ for all $G
\subseteq F$, which holds if and only if $F$ is a face of
$\Gamma_{I, \rho}$.
\end{proof}

The following theorem generalizes a result of
K.~Kimura~\cite{KimuraLyubResARA09}*{Theorem~1}, which shows that $\arank
I$ is at most the length of any Lyubeznik resolution
(see~\cite{LyubMonFreeRes88}). I.~Novik~\cite{NovikRooted02}*{Remark after
Theorem~1} showed that Lyubeznik resolutions arise as special cases of
resolutions supported on rooting complexes. More precisely, rooting maps
induce total orders on each rooted set; if these total orders are
compatible with each other and induce a total order on $\gens(I)$, then the
corresponding free resolution is a Lyubeznik resolution.
\begin{thm}
\label{thm:araLeqDimRootingCx}
Fix an integer $d \geqslant \max\{\deg f: f \in \gens(I)\}$. Let
$\ol{(-)} : \gens(I) \longrightarrow \monom(R)$ be a function such
that $\deg \ol{f} = d$ and $\sqrt{\ol{f}} =
\sqrt{f}$, for all $f \in \gens(I)$. For $r=0, \ldots, \dim
\Gamma_{I,\rho}$, let 
\[
g_r = \sum_{F \in \left(\Gamma_{I, \rho}\right)^{(r)}} \prod_{f \in F}
\ol{f}.
\]
Then $I = \sqrt{(g_i: 0 \leqslant r \leqslant \dim \Gamma_{I, \rho})}$.
In particular, $\arank_h I \leqslant 1 + \dim \Gamma_{I, \rho}$.
\end{thm}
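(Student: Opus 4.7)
\emph{Proof plan.} Write $\Gamma := \Gamma_{I,\rho}$, $\ell := \dim \Gamma$, and $J := (g_0, \ldots, g_\ell)$. The inclusion $J \subseteq I$ is immediate: for each $f \in \gens(I)$, the hypothesis $\sqrt{\ol f} = \sqrt f = f$ together with $f$ being squarefree forces $\mathrm{supp}(\ol f) = \mathrm{supp}(f)$, and hence $f \mid \ol f$ as monomials, so $\ol f \in (f) \subseteq I$. Every monomial summand of every $g_r$ is a product of such $\ol f$'s, and each $g_r$ is homogeneous of degree $(r+1)d$; the bound on $\arank_h I$ will follow once the reverse inclusion is established.

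For $I \subseteq \sqrt J$, my plan is to combine a telescoping identity coming from the cone structure of $\Gamma$ with an induction on the number of variables. By Lemma~\ref{thm:rootCxIsCone}, $\Gamma$ is a cone with apex $v_0 := \rho(\lambda_{L_1})$. I will split
\[
a_r := \sum_{\substack{F \in \Gamma^{(r)} \\ v_0 \in F}} \prod_{h \in F \setminus \{v_0\}} \ol h, \qquad
b_r := \sum_{\substack{F \in \Gamma^{(r)} \\ v_0 \notin F}} \prod_{h \in F} \ol h,
\]
so that $g_r = \ol{v_0}\, a_r + b_r$. The cone property supplies a bijection $F \leftrightarrow F \cup \{v_0\}$ between $\{F \in \Gamma^{(r)} : v_0 \notin F\}$ and $\{F' \in \Gamma^{(r+1)} : v_0 \in F'\}$ matching $\prod_F \ol h$ with $\prod_{F' \setminus \{v_0\}} \ol h$, so $b_r = a_{r+1}$. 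With boundary values $a_0 = 1$ and $a_{\ell+1} = 0$, iterating the recursion $a_{r+1} = g_r - \ol{v_0}\, a_r$ telescopes to
\[
\ol{v_0}^{\,\ell+1} = (-1)^\ell \sum_{r=0}^\ell (-\ol{v_0})^{\ell - r} g_r \in J,
\]
whence $v_0 \in \sqrt{(\ol{v_0})} \subseteq \sqrt J$ since $\sqrt{\ol{v_0} R} = (v_0)$.

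To capture the remaining generators, I will prove by induction on the number of variables that $I \subseteq \sqrt{J + (x)}$ for every variable $x$. Setting $I' := (I \cap \Bbbk[x_j : j \neq i])R$ with $x = x_i$, Lemma~\ref{thm:rooCxVarRestr} gives that $\rho' := \rho|_{L_{I'}}$ is a rooting map with $\Gamma_{I', \rho'} = \Gamma|_{L_1'}$, and the inductive hypothesis applied in the smaller polynomial ring yields $\sqrt{I'} = \sqrt{(g'_0, \ldots, g'_{\ell'})}$. Every monomial of $g_r - g'_r$ is divisible by $x$ (each of the additional faces contains some generator involving $x$, so the corresponding product of $\ol h$'s is divisible by $x$), so $g'_r \in J + (x)$; combined with the fact that every generator of $I$ involving $x$ already lies in $(x)$, this yields $I \subseteq \sqrt{J + (x)}$.

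Finally I combine the two steps via minimal primes: for any minimal prime $\mathfrak{p}$ over $J$, the inclusion $v_0 \in \sqrt J \subseteq \mathfrak{p}$ forces some variable $x \in \mathrm{supp}(v_0)$ to lie in $\mathfrak{p}$ (as $v_0$ is squarefree and $\mathfrak{p}$ is prime); then $\sqrt{J + (x)} \subseteq \mathfrak{p}$, and hence $I \subseteq \mathfrak{p}$. Intersecting over all minimal primes of $J$ gives $I \subseteq \sqrt J$. The cone telescoping in Step~2 is the cleanest and most decisive part of the plan; I expect the main obstacle to be the careful verification that the restricted sums $g'_r$ differ from $g_r$ only by a multiple of $x$ and that the induction on variables can be carried out cleanly via Lemma~\ref{thm:rooCxVarRestr}, together with the minimal-prime bookkeeping that transports the cone-apex information across all of $\mathrm{supp}(v_0)$.
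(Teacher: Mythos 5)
Your proposal is correct and follows essentially the same route as the paper: decompose each $g_r$ via the cone structure of $\Gamma_{I,\rho}$ with apex $v_0=\rho(\lambda_{L_1})$ (your $g_r=\ol{v_0}\,a_r+b_r$ with $b_r=a_{r+1}$ is exactly the paper's recursion), induct on the number of variables using Lemma~\ref{thm:rooCxVarRestr}, and glue the two pieces by looking at primes containing $J$. Your explicit telescoping identity $\ol{v_0}^{\,\ell+1}=(-1)^\ell\sum_{r}(-\ol{v_0})^{\ell-r}g_r\in J$ is a slightly slicker packaging of the paper's descent (which argues inside a fixed prime $\mathfrak p\supseteq J$), but the underlying combinatorics is identical; one small inaccuracy is the claim $f\mid\ol f$, which needs $f$ square-free --- what is actually used (and all that is true in general) is $\ol f\in\sqrt{(f)}$.
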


\begin{proof}
The second assertion follows from the first, after noting that for each
$r$, $g_r$ is a homogeneous polynomial of degree $(r+1)d$. We prove the
first assertion by induction on $n=\dim R$. We may assume that it holds for
all monomial ideals generated in $n-1$ or fewer variables.

Write $\Gamma = \Gamma_{I,\rho}$, $l = \dim \Gamma_{I, \rho}$ and $J =
(g_0, \ldots, g_l)$. For $F \subseteq L_1$, write $\pi_F = \prod_{f \in F}
\ol f$. Without loss of generality, $\rho(\lambda_{L_1}) =
f_1$. It follows from Lemma~\ref{thm:rootCxIsCone} that 
\begin{equation}
\label{eqn:glRecursion}
g_r  = \begin{cases}
\ol{f_1} + \cdots + \ol{f_m}, & r = 0 \\
\ol{f_1}
\left(\sum\limits\limits_{\substack{F \in \Gamma^{(r-1)} \\ f_1 \not
\in F}} \pi_F \right) + 
\sum\limits_{\substack{F \in \Gamma^{(r)}  \\ f_1 \not \in F}} 
\pi_F, & 1 \leqslant r \leqslant l-1 \\
\ol{f_1}
\left(\sum\limits\limits_{\substack{F \in \Gamma^{(l-1)} \\ f_1 \not
\in F}} \pi_F \right), & r = l.
\end{cases}
\end{equation}

Let $\mathfrak p \in \Spec R$ be such that $J \subseteq \mathfrak p$. We
first claim that $\ol{f_1} \in \mathfrak p$. For, otherwise, it follows
from \eqref{eqn:glRecursion} that $\ol{f_2} + \cdots + \ol{f_r} \in
\mathfrak p$, which implies that $\ol{f_1} \in \mathfrak p$, a
contradiction; hence $\ol{f_1} \in \mathfrak p$. Therefore there exists $i$
such that $x_i \in \mathfrak p$. 
Then $(J, x_i) \subseteq \mathfrak p$.
Let $I' = (I \cap \Bbbk[x_1, \ldots, \widehat{x_i}, \ldots,
x_n])R$.  Let $L'$ be the lcm-lattice of $I'$, $L'_1$ the set of atoms of
$L'$, and $\rho' := \rho|_{L'}$. 
Since 
\[
g_r \equiv 
\sum_{F \in (\Gamma_{I', \rho'})^{(r)}} 
\prod_{f \in F} \ol{f} \quad \mod x_i, 
\]
we see, by induction, that $\sqrt{(J,x_i)} = (I',x_i)$. Therefore $I
\subseteq \mathfrak p$.
\end{proof}

Since Lyubeznik resolutions arise as special cases of resolutions supported
on rooting complexes, one might wonder whether it is possible to construct
shorter resolutions by taking rooting maps that do not induce a global
ordering of $\gens(I)$. This is the motivation of the following question.
We, however, do not know the answer.
\begin{question}
Let $L$ be any finite atomic lattice. Is
\[
\min \{\dim \Gamma_{L, \rho} : \rho \;\text{a rooting map}\} \gneq
\min \{\dim \Gamma_{L, \rho} : \rho \;\text{a rooting map induced by a
total order on}\; L_1\}?
\]
\end{question}

The polynomials $g_r$ in the previous theorem may be of relatively large
degree. The following proposition provides another set of generators up to
radical, of smaller degree, at the expense of increasing the number of
generators.

\begin{propn}
\label{thm:araLeqHtLcmLattice}
For $r=1, \ldots, \height L$, fix an integer $d_r \geqslant  \max \{\deg \sigma
: \sigma \in L, \height \sigma = r\}$. Let $\ol{(\;)} : L
\longrightarrow \monom(R)$ be a function such that $\deg \ol{\sigma}
= d_{\height
\sigma}$ and
$\sqrt{\ol{\sigma}} = \sqrt{\sigma}$, for all $\sigma \in L$.
For $r=1, \ldots, \height L$, let
\[
g_r = \sum_{\substack{\sigma \in L \\ \height \sigma = r}}
\ol{\sigma}.
\]
Then $I = \sqrt{(g_i: 1 \leqslant r \leqslant \height L)}$.
In particular, $\arank_h I \leqslant 1 + \height L$.
\end{propn}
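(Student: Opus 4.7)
The plan is to prove $I = \sqrt{(g_r : 1 \leqslant r \leqslant \height L)}$ by establishing the two containments separately. The containment $\sqrt{(g_1, \ldots, g_{\height L})} \subseteq \sqrt I$ is immediate: since $\sqrt{\ol\sigma} = \sqrt\sigma$ and every $\sigma \in L \minus \{\hat 0\}$ is divisible by some atom of $I$, each $\ol\sigma$, and hence each $g_r$, lies in $\sqrt I$.

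For the reverse containment I would argue by induction on $|L|$ that any prime $\mathfrak p \supseteq (g_1, \ldots, g_{\height L})$ contains $I$; the base case $|L| = 2$ is immediate since then $I$ is principal. The pivotal observation is that $\hat 1_L$ is the unique element of $L$ of maximum height: any $\sigma$ with $\sigma \neq \hat 1_L$ satisfies $\sigma \precneqq \hat 1_L$, so a saturated chain in $[\hat 0, \sigma]$ can be strictly extended to $\hat 1_L$, contradicting maximality of $\height L$. Hence $g_{\height L} = \ol{\hat 1_L}$ is a single monomial lying in $\mathfrak p$, which forces some variable $x_i$ dividing $\hat 1_L$ into $\mathfrak p$. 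Let $I'$ be the subideal of $I$ generated by the atoms not involving $x_i$; its lcm-lattice is $L' = \{\sigma \in L : x_i \nmid \sigma\}$, a proper subposet of $L$ since it omits $\hat 1_L$. Atoms of $I$ divisible by $x_i$ lie in $(x_i) \subseteq \mathfrak p$ automatically, so it suffices to show $I' \subseteq \mathfrak p$. Reducing modulo $x_i$ yields
\[
g_r \equiv \sum_{\sigma \in L',\ \height_L \sigma = r} \ol\sigma \pmod{x_i},
\]
so each such residue also lies in $\mathfrak p$.

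The subtle point is that these residues are indexed by $\height_L$, whereas the proposition applied to $I'$ would produce polynomials indexed by $\height_{L'}$, and the two functions need not agree on $L'$. To close the induction I would first establish a mild generalization in which $\height$ is replaced by any function $\phi \colon L \to \mathbb Z_{\geqslant 0}$ with $\phi(\hat 0) = 0$ that is strictly increasing along chains: the argument above goes through verbatim, since $\hat 1_L$ remains the unique $\phi$-maximum, $\phi|_{L'}$ is again strictly chain-increasing on $L'$, and the constancy of $\deg \ol\sigma$ on $\phi$-levels restricts to $\phi|_{L'}$-levels. Applying this generalization to $(L', \height_L|_{L'})$, which has $|L'| < |L|$, then delivers $I' \subseteq \sqrt{(g_r \bmod x_i : 1 \leqslant r \leqslant \height L)} \subseteq \mathfrak p$, closing the induction. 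The main obstacle is precisely spotting that the naive induction with the statement as-given breaks down due to the mismatch between $\height_L|_{L'}$ and $\height_{L'}$, and recognizing that the proof generalizes cleanly to arbitrary strictly chain-increasing gradings on $L$.
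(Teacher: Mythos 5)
Your argument is correct and runs along the same lines as the paper's: locate a variable $x_i$ in $\mathfrak p$ using the fact that the top-level $g$ is a single monomial, pass to the subideal $I'$ generated by the atoms avoiding $x_i$, reduce the $g_r$ modulo $x_i$, and induct (the paper inducts on the number of variables rather than on $|L|$, but this is cosmetic). The one substantive divergence is your "mismatch" concern, which is actually unfounded: for $\sigma \in L'$ one in fact has $\height_L \sigma = \height_{L'} \sigma$. The reason is that $L' = \{\sigma \in L : x_i \nmid \sigma\}$, and if $x_i \nmid \sigma$ then every $\tau \in L$ dividing $\sigma$ also avoids $x_i$; since each such $\tau$ is the lcm of the atoms of $L$ dividing it, and those atoms all avoid $x_i$, they are atoms of $L'$, so $\tau \in L'$. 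Hence $[\hat 0, \sigma]_L = [\hat 0, \sigma]_{L'}$ and the two height functions agree. This is exactly the observation the paper makes ("for any $\sigma \in L'$, $\height_L \sigma = \height_{L'} \sigma$"), so the naive induction does close. Your detour through arbitrary strictly chain-increasing gradings $\phi$ is a valid safety net and does prove a mildly more general statement, but it is not needed here; it would be worth noticing that the lcm-lattice structure already forces the compatibility you were worried about.
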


\begin{proof}
It suffices to show that $I \subseteq \sqrt{(g_i: 1 \leqslant r \leqslant \height
L)}$, which we do by induction on $n=\dim R$. We may assume that the
assertion holds for all monomial ideals generated in $n-1$ or fewer
variables.

Write $l = \height L$ and $J = (g_1, \ldots, g_l)$. Note that $g_l =
x_1x_2\cdots x_n$. Let $\mathfrak p \in \Spec R$ be such that $J \subseteq
\mathfrak p$. Then, there exists $t$ such that $(J, x_t) \subseteq
\mathfrak p$. Let $I'$ be the subideal of $I$ generated by the minimal
generators of $I$ that are not divisible by $x_t$. Write $L'$ for the the
lcm-lattice of $I'$. It is a sublattice of $L$, and, furthermore, for any
$\sigma \in L'$, $\height_L \sigma = \height_{L'} \sigma$. Hence, by
induction, $\sqrt{(J,x_t)} = (I', x_t) = (I,x_t)$. Therefore $I \subseteq
\mathfrak p$.
\end{proof}

\section*{Acknowledgements}
We thank D.~Arapura and G.~Lyubeznik for helpful comments.
The computer algebra system \texttt{Macaulay2} \cite{M2} provided valuable
assistance in studying examples.


\def\cfudot#1{\ifmmode\setbox7\hbox{$\accent"5E#1$}\else
  \setbox7\hbox{\accent"5E#1}\penalty 10000\relax\fi\raise 1\ht7
  \hbox{\raise.1ex\hbox to 1\wd7{\hss.\hss}}\penalty 10000 \hskip-1\wd7\penalty
  10000\box7}
\begin{bibdiv}
\begin{biblist}

\bib{BariLinAlg08}{article}{
      author={Barile, Margherita},
       title={Arithmetical ranks of {S}tanley-{R}eisner ideals via linear
  algebra},
        date={2008},
        ISSN={0092-7872},
     journal={Comm. Algebra},
      volume={36},
      number={12},
       pages={4540\ndash 4556},
         url={http://dx.doi.org/10.1080/00927870802182614},
      review={\MR{2473347 (2009h:13030)}},
}

\bib{BCPExtremal99}{article}{
      author={Bayer, Dave},
      author={Charalambous, Hara},
      author={Popescu, Sorin},
       title={Extremal {B}etti numbers and applications to monomial ideals},
        date={1999},
        ISSN={0021-8693},
     journal={J. Algebra},
      volume={221},
      number={2},
       pages={497\ndash 512},
         url={http://dx.doi.org/10.1006/jabr.1999.7970},
      review={\MR{1726711 (2001a:13020)}},
}

\bib{BrHe:CM}{book}{
      author={Bruns, Winfried},
      author={Herzog, J{\"u}rgen},
       title={Cohen-{M}acaulay rings},
      series={Cambridge Studies in Advanced Mathematics},
   publisher={Cambridge University Press},
     address={Cambridge},
        date={1993},
      volume={39},
        ISBN={0-521-41068-1},
      review={\MR{1251956 (95h:13020)}},
}

\bib{BrunsSchwanzlDetVar90}{article}{
      author={Bruns, Winfried},
      author={Schw{\"a}nzl, Roland},
       title={The number of equations defining a determinantal variety},
        date={1990},
        ISSN={0024-6093},
     journal={Bull. London Math. Soc.},
      volume={22},
      number={5},
       pages={439\ndash 445},
         url={http://dx.doi.org/10.1112/blms/22.5.439},
      review={\MR{MR1082012 (91k:14035)}},
}

\bib{BrShLoco98}{book}{
      author={Brodmann, M.~P.},
      author={Sharp, R.~Y.},
       title={Local cohomology: an algebraic introduction with geometric
  applications},
      series={Cambridge Studies in Advanced Mathematics},
   publisher={Cambridge University Press},
     address={Cambridge},
        date={1998},
      volume={60},
        ISBN={0-521-37286-0},
      review={\MR{MR1613627 (99h:13020)}},
}

\bib{BjoZieBrokenCkts91}{article}{
      author={Bj{\"o}rner, Anders},
      author={Ziegler, G{\"u}nter~M.},
       title={Broken circuit complexes: factorizations and generalizations},
        date={1991},
        ISSN={0095-8956},
     journal={J. Combin. Theory Ser. B},
      volume={51},
      number={1},
       pages={96\ndash 126},
         url={http://dx.doi.org/10.1016/0095-8956(91)90008-8},
      review={\MR{MR1088629 (92b:52027)}},
}

\bib{GPWSubspArr02}{incollection}{
      author={Gasharov, Vesselin},
      author={Peeva, Irena},
      author={Welker, Volkmar},
       title={Coordinate subspace arrangements and monomial ideals},
        date={2002},
   booktitle={Computational commutative algebra and combinatorics ({O}saka,
  1999)},
      series={Adv. Stud. Pure Math.},
      volume={33},
   publisher={Math. Soc. Japan},
     address={Tokyo},
       pages={65\ndash 74},
      review={\MR{1890096 (2003c:13011)}},
}

\bib{GPWLCMLattice99}{article}{
      author={Gasharov, Vesselin},
      author={Peeva, Irena},
      author={Welker, Volkmar},
       title={The lcm-lattice in monomial resolutions},
        date={1999},
        ISSN={1073-2780},
     journal={Math. Res. Lett.},
      volume={6},
      number={5-6},
       pages={521\ndash 532},
      review={\MR{MR1739211 (2001e:13018)}},
}

\bib{Loco24Hrs}{book}{
      author={Iyengar, Srikanth~B.},
      author={Leuschke, Graham~J.},
      author={Leykin, Anton},
      author={Miller, Claudia},
      author={Miller, Ezra},
      author={Singh, Anurag~K.},
      author={Walther, Uli},
       title={Twenty-four hours of local cohomology},
      series={Graduate Studies in Mathematics},
   publisher={American Mathematical Society},
     address={Providence, RI},
        date={2007},
      volume={87},
        ISBN={978-0-8218-4126-6},
      review={\MR{MR2355715}},
}

\bib{KimuraLyubResARA09}{article}{
      author={Kimura, Kyouko},
       title={Lyubeznik resolutions and the arithmetical rank of monomial
  ideals},
        date={2009},
        ISSN={0002-9939},
     journal={Proc. Amer. Math. Soc.},
      volume={137},
      number={11},
       pages={3627\ndash 3635},
         url={http://dx.doi.org/10.1090/S0002-9939-09-09950-X},
      review={\MR{MR2529869 (2010f:13013)}},
}

\bib{KummBipIdeals08}{article}{
      author={Kummini, Manoj},
       title={Regularity, depth and arithmetic rank of bipartite edge ideals},
        date={2009},
        ISSN={0925-9899},
     journal={J. Algebraic Combin.},
      volume={30},
      number={4},
       pages={429\ndash 445},
         url={http://dx.doi.org/10.1007/s10801-009-0171-6},
      review={\MR{MR2563135}},
}

\bib{LyubSurveyLocalCoh02}{incollection}{
      author={Lyubeznik, Gennady},
       title={A partial survey of local cohomology},
        date={2002},
   booktitle={Local cohomology and its applications ({G}uanajuato, 1999)},
      series={Lecture Notes in Pure and Appl. Math.},
      volume={226},
   publisher={Dekker},
     address={New York},
       pages={121\ndash 154},
      review={\MR{MR1888197 (2003b:14006)}},
}

\bib{LyubRegSeqLOCO84}{incollection}{
      author={Lyubeznik, Gennady},
       title={On the local cohomology modules {$H\sp i\sb {{\mathfrak a}}(R)$}
  for ideals {${\mathfrak a}$} generated by monomials in an {$R$}-sequence},
        date={1984},
   booktitle={Complete intersections (acireale, 1983)},
      series={Lecture Notes in Math.},
      volume={1092},
   publisher={Springer},
     address={Berlin},
       pages={214\ndash 220},
      review={\MR{MR775884 (86f:14002)}},
}

\bib{LyubMonFreeRes88}{article}{
      author={Lyubeznik, Gennady},
       title={A new explicit finite free resolution of ideals generated by
  monomials in an {$R$}-sequence},
        date={1988},
        ISSN={0022-4049},
     journal={J. Pure Appl. Algebra},
      volume={51},
      number={1-2},
       pages={193\ndash 195},
         url={http://dx.doi.org/10.1016/0022-4049(88)90088-6},
      review={\MR{MR941900 (89c:13020)}},
}

\bib{LyubEtCohDim93}{article}{
      author={Lyubeznik, Gennady},
       title={\'{E}tale cohomological dimension and the topology of algebraic
  varieties},
        date={1993},
        ISSN={0003-486X},
     journal={Ann. of Math. (2)},
      volume={137},
      number={1},
       pages={71\ndash 128},
         url={http://dx.doi.org/10.2307/2946619},
      review={\MR{1200077 (93m:14016)}},
}

\bib{M2}{misc}{ label={M2},
      author={Grayson, Daniel~R.},
      author={Stillman, Michael~E.},
       title={Macaulay 2, a software system for research in algebraic
  geometry},
        date={2006},
        note={Available at \href{http://www.math.uiuc.edu/Macaulay2/}
  {http://www.math.uiuc.edu/Macaulay2/}},
}

\bib{MatsCRT89}{book}{
      author={Matsumura, Hideyuki},
       title={Commutative ring theory},
     edition={Second},
      series={Cambridge Studies in Advanced Mathematics},
   publisher={Cambridge University Press},
     address={Cambridge},
        date={1989},
      volume={8},
        ISBN={0-521-36764-6},
        note={Translated from the Japanese by M. Reid},
      review={\MR{MR1011461 (90i:13001)}},
}

\bib{MilnEtale80}{book}{
      author={Milne, James~S.},
       title={\'{E}tale cohomology},
      series={Princeton Mathematical Series},
   publisher={Princeton University Press},
     address={Princeton, N.J.},
        date={1980},
      volume={33},
        ISBN={0-691-08238-3},
      review={\MR{559531 (81j:14002)}},
}

\bib{NovikRooted02}{article}{
      author={Novik, Isabella},
       title={Lyubeznik's resolution and rooted complexes},
        date={2002},
        ISSN={0925-9899},
     journal={J. Algebraic Combin.},
      volume={16},
      number={1},
       pages={97\ndash 101},
         url={http://dx.doi.org/10.1023/A:1020838732281},
      review={\MR{MR1941987 (2003j:13021)}},
}

\bib{OgusLocCohDim73}{article}{
      author={Ogus, Arthur},
       title={Local cohomological dimension of algebraic varieties},
        date={1973},
        ISSN={0003-486X},
     journal={Ann. of Math. (2)},
      volume={98},
       pages={327\ndash 365},
      review={\MR{MR0506248 (58 \#22059)}},
}

\bib{SGA4}{book}{ label={SGA4},
       title={Th\'eorie des topos et cohomologie \'etale des sch\'emas.},
      series={Lecture Notes in Mathematics, Vols. 269, 270 and 305},
   publisher={Springer-Verlag},
     address={Berlin},
        date={1972-3},
        note={S{\'e}minaire de G{\'e}om{\'e}trie Alg{\'e}brique du Bois-Marie
  1963--1964 (SGA 4), Dirig{\'e} par M. Artin, A. Grothendieck, et J. L.
  Verdier. Avec la collaboration de N. Bourbaki, P. Deligne et B. Saint-Donat},
}

\bib{VarbaroArithRk10}{misc}{
      author={Varbaro, Matteo},
       title={On the arithmetical rank of certain {S}egre embeddings},
        date={2010},
        note={arXiv:1007.5440v1 [math.AG]},
}

\bib{YanEtaleGorMcp00}{article}{
      author={Yan, Zhao},
       title={An \'etale analog of the {G}oresky-{M}ac{P}herson formula for
  subspace arrangements},
        date={2000},
        ISSN={0022-4049},
     journal={J. Pure Appl. Algebra},
      volume={146},
      number={3},
       pages={305\ndash 318},
      review={\MR{MR1742346 (2000k:14041)}},
}

\end{biblist}
\end{bibdiv}

\end{document}